\numberwithin{equation}{section}
\newtheorem{theorem}{Theorem}[section]
\newtheorem{proposition}[theorem]{Proposition}
\newtheorem{corollary}[theorem]{Corollary}
\newtheorem{lemma}[theorem]{Lemma}
\theoremstyle{definition}
\newtheorem{definition}[theorem]{Definition}
\newtheorem{remark}[theorem]{Remark}
\DeclareMathOperator{\skel}{Skel}
\DeclareMathOperator\lk{\mathrm{lk}}
\DeclareMathOperator\st{\mathrm{st}}
\DeclareMathOperator{\conv}{\mathrm{conv}}
\DeclareMathOperator\crtr{\mathrm{crtr}}
\DeclareMathOperator\sd{\mathrm{sd}}
\DeclareMathOperator\so{\mathrm{so}}
\DeclareMathOperator{\dist}{\mathrm{dist}}
\newcommand{\field}{{\bf k}}
\newcommand{\R}{{\mathbb R}}
\title{A characterization of homology manifolds with $g_2\leq 2$}
\author{Hailun Zheng\\
	\small Department of Mathematics \\[-0.8ex]
	\small University of Washington\\[-0.8ex]
	\small Seattle, WA 98195-4350, USA\\[-0.8ex]
	\small \texttt{hailunz@math.washington.edu}
}
\begin{document}
	\maketitle
	\begin{abstract}
		We characterize homology manifolds with $g_2\leq 2$. Specifically, using retriangulations of simplicial complexes, we give a short proof of Nevo and Novinsky's result on the characterization of homology $(d-1)$-spheres with $g_2=1$ for $d\geq 5$ and extend it to the class of normal pseudomanifolds. We proceed to prove that every prime homology manifold with $g_2=2$ is obtained by centrally retriangulating a polytopal sphere with $g_2\leq 1$  along a certain subcomplex. This implies that all homology manifolds with $g_2=2$ are polytopal spheres.
	\end{abstract}
	\section{Introduction}
	Characterizing face-number related invariants of a given class of simplicial complexes has been a central topic in topological combinatorics. One of the most well-known results is the $g$-theorem (see \cite{BL}, \cite{BL2}, and \cite{St}), which completely characterizes the $g$-vectors of simplicial $d$-polytopes. It follows from the $g$-theorem that for every simplicial $d$-polytope $P$, the $g$-numbers of $P$, $g_0,g_1,\cdots, g_{\left\lfloor d/2\right\rfloor}$, are non-negative. This naturally leads to the question of when equality $g_{i}=0$ is attained for a fixed $i$. While it is easy to see that $g_1(P)=0$ holds if and only if $P$ is a $d$-dimensional simplex, the question of which polytopes satisfy $g_2=0$ is already highly non-trivial. This question was settled by Kalai \cite{K}, using rigidity theory of frameworks, in the generality of simplicial manifolds; his result was then further extended by Tay \cite{T} to all normal pseudomanifolds. 
	
	To state these results, known as the lower bound theorem, recall that a \emph{stacking} is the operation of building a shallow pyramid over a facet of a given simplicial polytope, and a \emph{stacked $(d-1)$-sphere} on $n$ vertices is the $(n-d)$-fold connected sum of the boundary complex of a $d$-simplex, denoted as $\partial \sigma^d$, with itself.
	\begin{theorem}\label{thm: Kalai}
		Let $\Delta$ be a normal pseudomanifold of
		dimension $d\geq 3$. Then $g_2(\Delta)\geq 0$. Furthermore, if $d\geq 4$, then equality holds if and only if $\Delta$ is a stacked sphere.
	\end{theorem}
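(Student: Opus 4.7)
\emph{Proof proposal.} The plan is to follow Kalai's rigidity-theoretic argument, with Tay's modifications to handle the full class of normal pseudomanifolds. The central object is the $d$-dimensional rigidity matrix $R_d(\Delta)$ of the $1$-skeleton of $\Delta$, realized as a graph framework in generic position in $\R^d$. Its left kernel contains the $\binom{d+1}{2}$-dimensional space of trivial infinitesimal motions, so $\operatorname{rank} R_d(\Delta)\le d f_0(\Delta)-\binom{d+1}{2}$, while the row count of $R_d(\Delta)$ is $f_1(\Delta)$. Since $g_2(\Delta)=f_1(\Delta)-d f_0(\Delta)+\binom{d+1}{2}$, the inequality $g_2(\Delta)\ge 0$ reduces to proving that the $1$-skeleton is \emph{generically $d$-rigid}, i.e.\ that equality holds in the rank bound.

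I would prove generic $d$-rigidity of the $1$-skeleton of every normal $(d-1)$-pseudomanifold with $d\geq 3$ by induction on $d$. The base case $d=3$ follows from Gluck's theorem on generic $3$-rigidity of triangulated $2$-spheres, combined with the observation that every normal $2$-pseudomanifold is in fact a closed $2$-manifold (since the link of each vertex is a connected $2$-regular graph, hence a cycle) and a short argument to extend Gluck's theorem to arbitrary closed surfaces. For the inductive step, normality forces each vertex link in $\Delta$ to be a connected normal $(d-2)$-pseudomanifold, hence generically $(d-1)$-rigid by induction; the Cone Lemma then upgrades this to generic $d$-rigidity of the closed star of each vertex, and a gluing lemma (the union of two generically $d$-rigid complexes sharing a subcomplex that contains an edge is again generically $d$-rigid) assembles these stars into $\Delta$ by traversing its dual graph.

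For the equality assertion with $d\geq 4$, the assumption $g_2(\Delta)=0$ forces the rows of $R_d(\Delta)$ to be linearly independent, so $\Delta$ carries no non-zero self-stress. The structural consequence to be extracted is that $\Delta$ must contain a vertex $v$ of degree exactly $d$ whose link is $\partial\sigma^{d-1}$; granting this, removing $v$ and capping off its link with a single facet performs an \emph{anti-stacking}, producing a normal $(d-1)$-pseudomanifold $\Delta'$ with $f_0(\Delta')=f_0(\Delta)-1$ and $g_2(\Delta')=0$. Induction on $f_0$ identifies $\Delta'$ as a stacked sphere, and reversing the anti-stacking realizes $\Delta$ as the connected sum of $\Delta'$ with $\partial\sigma^d$; the base case $f_0=d+1$ is $\partial\sigma^d$ itself.

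The hard part will be locating the degree-$d$ vertex. For triangulated closed manifolds, its existence follows from a reasonably direct local-versus-global rigidity argument: each infinitesimal stress at $v$ extends to a global stress, so stress-freeness forces links of low-degree vertices to be very constrained. Normal pseudomanifolds, however, permit vertex identifications that closed manifolds forbid, and I expect the bulk of the work to be confirming that such identifications do not obstruct the existence of a true degree-$d$ vertex, and simultaneously verifying that the anti-stacking output $\Delta'$ remains a normal pseudomanifold rather than acquiring a new singularity. This is precisely where Tay's extension of Kalai's theorem requires more than the manifold proof, and where the induction must be set up carefully to feed the right information into each step.
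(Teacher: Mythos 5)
The paper does not prove this theorem; it states it as background, attributing the inequality and the equality characterization to Kalai \cite{K} for manifolds and to Tay \cite{T} for normal pseudomanifolds (and it cites Fogelsanger \cite{F} separately, in Lemma~\ref{lm: basic rigidity}, for the generic $d$-rigidity of normal pseudomanifold graphs). So there is no internal proof to compare against, and the appropriate benchmark is the cited literature.

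Measured against that literature, your outline follows the right high-level route --- reduce $g_2\geq 0$ to generic $d$-rigidity of the $1$-skeleton, prove rigidity by induction on dimension via the Cone Lemma and gluing, then handle equality by locating a simplicial vertex and anti-stacking --- but one of the stated ingredients is wrong and another is substantially underdeveloped. The gluing lemma you invoke (``the union of two generically $d$-rigid complexes sharing a subcomplex that contains an edge is again generically $d$-rigid'') is false for $d\geq 3$: two tetrahedra sharing only an edge are free to rotate about that edge. The correct hypothesis is that the shared vertex set has size at least $d$. In the induction this is what actually saves you: for adjacent vertices $u,v$, the overlap $\st_\Delta u \cap \st_\Delta v$ contains $\st_\Delta\{u,v\}$, whose vertex set has size at least $d$ because $\lk_\Delta\{u,v\}$ is a normal $(d-3)$-pseudomanifold, hence has at least $d-2$ vertices --- so the argument can be repaired, but the lemma as you stated it does not do the job. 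On the equality case, the existence of a degree-$d$ vertex with $\lk_\Delta v = \partial\sigma^{d-1}$ is exactly the crux; you flag it as ``the hard part'' but offer only the vague remark that stress-freeness ``forces links of low-degree vertices to be very constrained.'' In Kalai's and Tay's treatments this step is carried by a genuine structural argument (all vertex links have $g_2=0$ by the coning inequality, hence are stacked by induction on dimension; one then analyses the combinatorics of stacked links to extract a simplicial vertex and to verify that collapsing it preserves normality). As written, that part of your proposal is a statement of intent rather than a proof, and it is precisely the part where Tay's pseudomanifold extension required new work beyond Kalai's manifold case.
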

	Continuing this line of research, Nevo and Novinsky \cite{NN} characterized all homology spheres with $g_2=1$. Their main theorem is quoted below.
	\begin{theorem}\label{thm: NevoNovinsky}
		Let $d \geq 4$, and let $\Delta$ be a homology $(d-1)$-sphere without missing facets. Assume that $g_2(\Delta) = 1$. Then $\Delta$ is combinatorially isomorphic to either the join of $\partial \sigma^i$ and $\partial \sigma^{d-i}$, where $2\leq i\leq d-2$, or the join of $\partial \sigma^{d-2}$ and a cycle. Hence every homology $(d-1)$-sphere with $g_2=1$ is combinatorially isomorphic to a homology $(d-1)$-sphere obtained by
		stacking over any of these two types of spheres.
	\end{theorem}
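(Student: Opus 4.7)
My plan has two phases: first show that $\Delta$ must decompose as a nontrivial join $\Delta = A * B$, and then classify the possible factors using the behavior of $g_2$ under joins. Phase~1 is the main content; Phase~2 is a direct computation. I expect the retriangulation method to apply for $d \geq 5$, with the case $d = 4$ handled by a separate direct analysis.

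For Phase~2, assume $\Delta = A * B$ with $\dim A = a-1$, $\dim B = b-1$, and $a + b = d$. Multiplicativity of the $h$-polynomial under join yields
\[
g_2(\Delta) = g_2(A) + g_2(B) + h_1(A) \cdot h_1(B) = 1.
\]
If $a, b \geq 3$, Theorem~\ref{thm: Kalai} forces $g_2(A), g_2(B) \geq 0$ and $h_1(A), h_1(B) \geq 1$, so the only possibility is $g_2(A) = g_2(B) = 0$ together with $h_1(A) = h_1(B) = 1$. The condition $h_1 = 1$ characterizes spheres on the minimum number of vertices, which are boundaries of simplices; thus $A = \partial\sigma^a$ and $B = \partial\sigma^b$, giving the first form. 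If one factor is a $1$-sphere, say $A = C_m$ with $m \geq 3$, substituting $h_1(C_m) = m-2$ and $g_2(C_m) = 3-m$, along with $g_2(B) \geq 0$ and $h_1(B) \geq 1$, forces $g_2(B) = 0$ and $h_1(B) = 1$, whence $B = \partial\sigma^{d-2}$, giving the second form. Cases where a factor splits further (e.g.\ $\partial\sigma^1 * \partial\sigma^1 = C_4$) are absorbed into the second form.

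For Phase~1, I plan to induct on $d$, leveraging the MPW reduction $g_2(\lk_\Delta v) \leq g_2(\Delta) = 1$ for every vertex $v$: each link is either a stacked $(d-2)$-sphere (by Theorem~\ref{thm: Kalai}, valid for $d \geq 5$) or a homology sphere with $g_2 = 1$ to which the inductive hypothesis applies. The retriangulation technique announced in the abstract should enter here: for a well-chosen vertex $v$, I would centrally retriangulate the star of $v$ by replacing it with an appropriate cone, producing a new simplicial sphere $\Delta'$ of the same dimension with either strictly smaller invariants (contradicting minimality in the induction) or a vertex-set partition directly exhibiting a join decomposition. The no-missing-facets hypothesis is crucial: it rules out $\lk(v) = \partial\sigma^{d-1}$ --- which would otherwise force $\Delta = \partial\sigma^d$ and hence $g_2 = 0$ --- and guarantees that every retriangulated complex remains a valid homology sphere. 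The principal obstacle is ensuring that the retriangulation procedure terminates at one of the claimed canonical forms; this requires a careful case split depending on whether every link satisfies $g_2 = 0$ or some link attains $g_2 = 1$, and is the real technical bridge between the global invariant $g_2 = 1$ and the rigid combinatorial datum of a join decomposition.
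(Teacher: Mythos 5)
Phase~2 of your proposal is a correct computation: the formula $g_2(A*B)=g_2(A)+g_2(B)+h_1(A)h_1(B)$ and the resulting classification of the factors is fine (one small note: for a factor of dimension $2$, Kalai's theorem doesn't literally apply, but $g_2=0$ holds automatically by Dehn--Sommerville, so the conclusion stands). The serious problem is Phase~1, which you present as a \emph{plan} rather than a proof; you explicitly defer the ``real technical bridge'' between $g_2=1$ and the join decomposition. Since Phase~1 is the entire content of the theorem, the proposal has a genuine gap there. Moreover the operation you sketch --- ``centrally retriangulate the star of $v$ by replacing it with an appropriate cone'' --- is confused: $\st_\Delta v$ already \emph{is} the cone $v*\lk_\Delta v$, so replacing it by a cone does nothing. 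The paper's actual engine is the \emph{inverse} stellar retriangulation $\sd_v^{-1}(\Delta)$, which replaces $\st_\Delta v$ with the non-conical ball $(\lk_\Delta v)(1)$ when $\lk_\Delta v$ is stacked; this decreases $g_2$ by $g_1(\lk_\Delta v)$, landing on a stacked sphere.

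What is missing, concretely, is the counting step that drives the case split. Using Lemma~\ref{lm: coning stress} one gets $g_2(\lk_\Delta v)\in\{0,1\}$ for every $v$, and then Lemma~\ref{lm: Swartz} together with the bound $g_3\le g_2^{\langle 2\rangle}=1$ from Lemma~\ref{lm: g-vector of normal pseudo} gives $\sum_v g_2(\lk_\Delta v)=(d-1)+3g_3\le d+2$. If \emph{all} links have $g_2=1$ this forces $f_0(\Delta)=d+2$, and then Lemma~\ref{lm: f_0=d+2} (a normal pseudomanifold on $d+2$ vertices is a join of two boundary simplices) finishes that branch without any retriangulation; otherwise some link has $g_2=0$ and the inverse stellar retriangulation applies. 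Your case split (``every link has $g_2=0$'' versus ``some link attains $g_2=1$'') is oriented the wrong way around and would not lead anywhere. Finally, you assert that $d=4$ is ``handled by a separate direct analysis'' but supply none; note that the paper's own alternative argument (Theorem~\ref{thm: g_2=1 normal pseudomanifold}) likewise only covers $d\ge 5$ and defers $d=4$ to the original Nevo--Novinsky proof.
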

	Their result implies that all homology spheres with $g_2=1$ are polytopal. The proof is based on rigidity theory for graphs.
	
	In this paper, we characterize all homology manifolds with $g_2\leq 2$. Our main strategy is to use three different retriangulations of simplicial complexes  with the properties that (1) the homeomorphism type of the complex is preserved under these retriangulations; and (2), the resulting changes in $g_2$ are easy to compute. Specifically, for a large subclass of these retriangulations, $g_2$ increases or decreases exactly by one. We use these properties to show that every homology manifold with $g_2\leq 2$ is obtained by centrally retriangulating a polytopal sphere of the same dimension but with a smaller $g_2$. As a corollary, every homology sphere with $g_2\leq 2$ is polytopal. Incidentally, this implies a result of Mani \cite{Ma} that all triangulated spheres with $g_1\leq 2$ are polytopal.
	
	This paper is organized as follows. In Section 2 we recall basic definitions and results pertaining to simplicial complexes, polytopes and framework rigidity. In Section 3 we define three retriangulations of simplicial complexes that serve as the main tool in later sections. In Section 4 and Section 5 we use these retriangulations to characterize normal pseudomanifolds with $g_2=1$ (of dimension at least four) and homology manifolds with $g_2=2$ (of dimension at least three), respectively, see Theorems \ref{thm: g_2=1 normal pseudomanifold}, \ref{thm:g_2=2 and d>4} and \ref{thm: g_2=2 and d=4}.
	\section{Preliminaries}
	\subsection{Basic definitions}
    We begin with basic definitions. A \textit{simplicial complex} $\Delta$ on vertex set $V=V(\Delta)$ is a collection of subsets $\sigma\subseteq V$, called \textit{faces}, that is closed under inclusion, and such that for every $v \in V$, $\{v\} \in \Delta$. The \textit{dimension} of a face $\sigma$ is $\dim(\sigma)=|\sigma|-1$, and the \textit{dimension} of $\Delta$ is $\dim(\Delta) = \max\{\dim(\sigma):\sigma\in \Delta\}$. The \textit{facets} of $\Delta$ are maximal faces of $\Delta$ under inclusion. We say that a simplicial complex $\Delta$ is \textit{pure} if all of its facets have the same dimension. A \textit{missing} face of $\Delta$ is any subset $\sigma$ of $V(\Delta)$ such that $\sigma$ is not a face of $\Delta$ but every proper subset of $\sigma$ is. A missing $i$-face is a missing face of dimension $i$. A pure simplicial complex $\Delta$ is \emph{prime} if it does not have any missing facets.
		
	The \textit{link} of a face $\sigma$ is $\lk_\Delta \sigma:=\{\tau-\sigma\in \Delta: \sigma\subseteq \tau\in \Delta\}$, and the \textit{star} of $\sigma$ is $\st_\Delta \sigma:= \{\tau \in\Delta: \sigma\cup\tau\in\Delta \}$. If $W\subseteq V(\Delta)$ is a subset of vertices, then we define the \emph{restriction} of $\Delta$ to $W$ to be the subcomplex $\Delta[W]=\{\sigma\in \Delta: \sigma\subseteq W\}$. The \emph{antistar} of a vertex is ${\rm{ast}}_\Delta(v)=\Delta[V-\{v\}]$. We also define the $i$-\emph{skeleton} of $\Delta$, denoted as $\skel_i(\Delta)$, to be the subcomplex of all faces of $\Delta$ of dimension at most $i$. If $\Delta$ and $\Gamma$ are two simplicial complexes on disjoint vertex sets, their \emph{join} is the simplicial complex $\Delta*\Gamma=\{\sigma\cup\tau:\sigma\in\Delta, \tau\in\Gamma\}$. When $\Delta$ consists of a single vertex, we write the \emph{cone} over $\Gamma$ as $u*\Gamma$. 
		
	A \emph{polytope} is the convex hull of a finite set of points in some $ \R^e$. It is called a $d$-polytope if it is $d$-dimensional. A polytope is \emph{simplicial} if all of its facets are simplices. A \emph{simplicial sphere} (resp. ball) is a simplicial complex whose geometric realization is homeomorphic to a sphere (resp. ball). The boundary complex of a simplicial polytope is called a \emph{polytopal sphere}. We usually denote the $d$-simplex by $\sigma^d$ and its boundary complex by $\partial \sigma^d$. For a fixed field $\field$, we say that $\Delta$ is a $(d-1)$-dimensional \textit{$\field$-homology sphere} if $\tilde{H}_i(\lk_\Delta \sigma;\field)\cong \tilde{H}_i(\mathbb{S}^{d-1-|\sigma|};\field)$ for every face $\sigma\in\Delta$ (including the empty face) and $i\geq -1$. (Here we denote by $\tilde{H}_*(\Delta,\field)$ the reduced homology with coefficients in a field $\field$.) Similarly, $\Delta$ is a $(d-1)$-dimensional $\field$-\emph{homology manifold} if all of its vertex links are $(d-2)$-dimensional $\field$-homology spheres. A $(d-1)$-dimensional simplicial complex $\Delta$ is called a \emph{normal $(d-1)$-pseudomanifold} if (i) it is pure and connected, (ii) every $(d-2)$-face of $\Delta$ is contained in exactly two facets and (iii) the link of each face of dimension $\leq d-3$ is also connected. For a fixed $d$, we have the following hierarchy:
		\begin{center}
			polytopal $(d-1)$-spheres $\subseteq$ homology $(d-1)$-spheres $\subseteq$ connected homology $(d-1)$-manifolds $\subseteq$ normal $(d-1)$-pseudomanifolds.
		\end{center}
		When $d=3$, the first two classes and the last two classes of complexes above coincide; however, starting from $d=4$, all of the inclusions above are strict.

		For a $(d-1)$-dimensional simplicial complex $\Delta$, the \textit{$f$-number} $f_i = f_i(\Delta)$ denotes the number of $i$-dimensional faces of $\Delta$. The vector $(f_{-1}, f_0, \cdots, f_{d-1})$ is called the $f$-\textit{vector} of $\Delta$. We also define the \textit{$h$-vector} $h(\Delta)=(h_0,\cdots, h_d)$ by the relation $\sum_{j=0}^{d}h_j\lambda^{d-j}=\sum_{i=0}^{d}f_{i-1}(\lambda-1)^{d-i}$. If $\Delta$ is a homology $(d-1)$-sphere, then by the Dehn-Sommerville relations, $h_i(\Delta)=h_{d-i}(\Delta)$ for all $0\leq i\leq d$. Hence it is natural to consider the successive differences between the $h$-numbers: we form a vector called the \emph{$g$-vector}, whose entries are given by $g_0=1$ and $g_i=h_i-h_{i-1}$ for $1\leq i\leq \left\lfloor d/2\right\rfloor$. The $f$-vector and $h$-vector of any homology sphere are determined by its $g$-vector. The following lemma, which was first stated by McMullen \cite{M} for shellable complexes and later generalized to all pure complexes by Swartz \cite[Proposition 2.3]{S4}, is a useful fact for face enumeration.
		  \begin{lemma}\label{lm: Swartz}
		  	If $\Delta$ is a pure $(d-1)$-dimensional simplicial complex, then for $k\geq 1$,
		  	\[\sum_{v\in V(\Delta)} g_k(\lk_\Delta v)=(k+1)g_{k+1}(\Delta)+(d+1-k)g_k(\Delta).\]
		  \end{lemma}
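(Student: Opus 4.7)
The plan is first to establish the $h$-vector analogue
\[
\sum_{v \in V(\Delta)} h_k(\lk_\Delta v) = (k+1)\,h_{k+1}(\Delta) + (d-k)\,h_k(\Delta), \qquad k \geq 0,
\]
and then to obtain the $g$-vector identity by subtracting the instances at indices $k$ and $k-1$.

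The starting point is the elementary double count $\sum_v f_{i-1}(\lk_\Delta v) = (i+1)\,f_i(\Delta)$ for $0 \leq i \leq d-1$, which holds because each $i$-face of $\Delta$ restricts to an $(i-1)$-face in the link of exactly its $i+1$ vertices. To assemble these identities into a single polynomial identity, I would multiply the identity indexed by $i$ by $(\lambda-1)^{d-1-i}$ and sum over $i$. Using the defining relation of the $h$-vector applied to each link $\lk_\Delta v$ (which is pure of dimension $d-2$), the left side collapses to $\sum_v \sum_j h_j(\lk_\Delta v)\,\lambda^{d-1-j}$, while the right side becomes $P(\lambda) := \sum_{j=1}^d j\,f_{j-1}(\Delta)\,(\lambda-1)^{d-j}$.

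The heart of the argument is to re-express $P(\lambda)$ in terms of the $h_j(\Delta)$. Setting $\mu = \lambda-1$ and $G(\mu) := \sum_i f_{i-1}(\Delta)\,\mu^{d-i}$, a short calculation shows $P(\lambda) = d\,G(\mu) - \mu\,G'(\mu)$. Substituting the defining identity $G(\mu) = \sum_j h_j(\Delta)\,\lambda^{d-j}$ and writing $\mu = \lambda - 1$ gives
\[
P(\lambda) = \sum_j j\,h_j(\Delta)\,\lambda^{d-j} + \sum_j (d-j)\,h_j(\Delta)\,\lambda^{d-j-1}.
\]
Reading off the coefficient of $\lambda^{d-1-k}$ on both sides then yields the displayed $h$-vector identity.

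To finish, I would subtract this identity at level $k-1$ from the one at level $k$ and invoke $g_k = h_k - h_{k-1}$. The $h_{k-1}$ coefficients on the right combine cleanly because $d - (k-1) = d+1-k$, and one reads off
\[
\sum_v g_k(\lk_\Delta v) = (k+1)\,h_{k+1}(\Delta) + (d-2k)\,h_k(\Delta) - (d+1-k)\,h_{k-1}(\Delta) = (k+1)\,g_{k+1}(\Delta) + (d+1-k)\,g_k(\Delta).
\]
No step presents a serious obstacle; the proof is a generating-polynomial manipulation, and the main care required is in tracking index shifts between $\lambda$, $\mu$, and the summation variables, together with the dimensional shift when applying the $h$-vector definition to $\Delta$ versus to its vertex links.
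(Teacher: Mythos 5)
The paper does not prove this lemma; it cites it to McMullen and to Swartz's Proposition 2.3 in \cite{S4}, so there is no in-paper argument to compare against. Your proof is correct and complete: the double-counting identity $\sum_v f_{i-1}(\lk_\Delta v)=(i+1)f_i(\Delta)$ is right (purity of $\Delta$ guarantees each $\lk_\Delta v$ is pure of dimension $d-2$, so its $h$-vector is correctly indexed), the manipulation $P(\lambda)=dG(\mu)-\mu G'(\mu)$ and its re-expansion in powers of $\lambda$ are both verified, and the final telescoping from $h$-numbers to $g$-numbers checks out because $-(k+1)+(d+1-k)=d-2k$. This generating-polynomial derivation is the standard one for extending McMullen's shellable case to arbitrary pure complexes.
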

	
	\subsection{The generalized lower bound theorem for polytopes}
	Theorem \ref{thm: Kalai} provides a full description of normal pseudomanifolds with $g_2=0$. To characterize the simplicial polytopes with $g_i=0$ for $i\geq 3$, we need to generalize stackedness. Following Murai and Nevo \cite{MN}, given a simplicial complex $\Delta$ and $i\geq 1$,  we let 
	\[\Delta(i):=\{\sigma\subseteq V(\Delta)\mid\skel_i(2^\sigma)\subseteq \Delta\},\]
	where $2^\sigma$ is the power set of $\sigma$. (In other words, we add to $\Delta$ all simplices whose $i$-dimensional skeleton is contained in $\Delta$.) 
	
	A \emph{homology $d$-ball} (over a field $\field$) is a $d$-dimensional simplicial complex $\Delta$ such that (i) $\Delta$ has the same homology as the $d$-dimensional ball, (ii) for every face $F$, the link of $F$ has the same homology as the $(d-|F|)$-dimensional ball or sphere, and (iii) the boundary complex, $\partial\Delta:=\{F\in \Delta\mid \tilde{H}_i(\lk_\Delta F)=0, \forall i\}$, is a homology $(d-1)$-sphere. If $\Delta$ is a homology $d$-ball, the faces of $\Delta-\partial \Delta$ are called the \emph{interior} faces of $\Delta$. If furthermore $\Delta$ has no interior $k$-faces for $k\leq d-r$, then $\Delta$ is said to be $(r-1)$-\emph{stacked}. An $(r-1)$-\emph{stacked} homology sphere (resp. simplicial sphere) is the boundary complex of an $(r-1)$-stacked triangulation of a homology ball (resp. simplicial ball). It is easy to see that being stacked is equivalent to being 1-stacked. The following theorem is a part of the generalized lower bound theorem established by Murai and Nevo, see \cite[Theorem 1.2 and Lemma 2.1]{MN}. 
	\begin{theorem}\label{thm: GLBT}
		Let $\Delta$ be a polytopal $(d-1)$-sphere and $2\leq r\leq d/2$. Then $g_r(\Delta)=0$ if and only if $\Delta$ is $(r-1)$-stacked. Furthermore, if that happens, then $\Delta(d-r)=\Delta(r-1)$ is a simplicial $d$-ball.
	\end{theorem}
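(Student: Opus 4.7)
The plan is to split the biconditional and then address the supplementary claim about $\Delta(d-r)=\Delta(r-1)$. The forward direction---if $\Delta=\partial B$ for some $(r-1)$-stacked triangulation $B$ of a homology $d$-ball, then $g_r(\Delta)=0$---is a direct consequence of the McMullen--Walkup duality between $h(B)$ and the relative $h$-vector $h(B,\partial B)$, together with the hypothesis that $B$ has no interior faces of dimension $\leq d-r$. The vanishing of the interior $f$-numbers in low degrees forces enough entries of $h(B)$ to vanish, and the standard formula expressing $h(\partial B)$ in terms of $h(B)$ then yields $h_r(\Delta)-h_{r-1}(\Delta)=0$.

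For the reverse direction, the key input is the hard Lefschetz theorem for simplicial polytopes. Let $A=\field[\Delta]/(\Theta)$ be the Artinian reduction of the Stanley--Reisner ring by a generic linear system of parameters, and let $\omega\in A_1$ be generic. Hard Lefschetz says $\cdot\,\omega^{d-2i}:A_i\to A_{d-i}$ is an isomorphism for $0\leq i\leq d/2$, which implies $\dim_{\field}(A/\omega A)_i=g_i(\Delta)$. The hypothesis $g_r(\Delta)=0$, combined with Macaulay's theorem on $M$-sequences applied to the graded algebra $A/\omega A$, then forces $g_i(\Delta)=0$ for every $r\leq i\leq d-r$; equivalently, the entire middle portion of the $g$-vector vanishes.

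I would next set $B:=\Delta(r-1)$ and verify that $B$ is a simplicial $d$-ball whose boundary is $\Delta$ and which contains no interior face of dimension $\leq d-r$. Face enumeration from the vanishing range of $g$ pins down the $f$-vector of $B$, and one checks that it matches what an $(r-1)$-stacked ball with boundary $\Delta$ must have. The identification $\Delta(r-1)=\Delta(d-r)$ then follows formally: the inclusion $\Delta(d-r)\subseteq \Delta(r-1)$ is tautological, and the reverse inclusion amounts to showing that any subset of $V(\Delta)$ of size $\leq d-r+1$ whose $(r-1)$-skeleton lies in $\Delta$ must already sit inside a $d$-face of $B$, which can be read off from the facet structure of $B$ constructed above.

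The main obstacle I expect is showing that $B$ is genuinely a topological $d$-ball rather than merely a pure $d$-complex with the correct face numbers. The natural strategy is to induct on $d$ through links: each vertex link in $B$ should be an $(r-1)$-stacked $(d-1)$-ball over the corresponding vertex link in $\Delta$, which is itself polytopal (or at least a homology sphere satisfying hard Lefschetz), so the inductive hypothesis applies. Controlling the reduced homology of $B$ globally---so that it vanishes in all positive degrees---requires extra input from a minimal free resolution of $\field[\Delta]$ relating its graded Betti numbers to the vanishing of $g_i$ in the middle range, and this bookkeeping is where the technical heart of the argument (following Murai--Nevo) sits.
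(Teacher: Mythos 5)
The paper does not prove Theorem~\ref{thm: GLBT} at all: it is imported verbatim from Murai and Nevo \cite[Theorem~1.2 and Lemma~2.1]{MN}, so there is no in-paper argument to compare your sketch against. What you have written is a reasonable high-level summary of the known proof from the literature --- the forward direction is the classical McMullen--Walkup relative-$h$-vector argument, and the reverse direction does rest on hard Lefschetz plus Macaulay's bound on the Hilbert function of the Artinian quotient $A/\omega A$ to annihilate the middle of the $g$-vector, followed by the verification that $B=\Delta(r-1)$ is a homology $d$-ball with the stated properties.

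One concrete caution about your proposed closing step: the ``induct on $d$ through links'' strategy does not by itself finish the ball verification, and it has a gap as stated. To apply the inductive hypothesis to $\lk_\Delta v$ you need $g_r(\lk_\Delta v)=0$; this does follow from Lemma~\ref{lm: Swartz} once $g_r(\Delta)=g_{r+1}(\Delta)=0$, but the boundary case $r=d/2$ breaks the dimension hypothesis $r\leq (d-1)/2$ for the $(d-2)$-dimensional links and would need separate treatment. More seriously, even after establishing that all vertex links of $B$ are $(r-1)$-stacked $(d-1)$-balls, this is purely local information and does not control the global reduced homology of $B$: one can have a complex all of whose links are balls which is nonetheless not contractible. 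Murai and Nevo close this gap with commutative algebra on the Stanley--Reisner ring of $\Delta(r-1)$ (socle degrees, Lefschetz maps, and Cohen--Macaulayness), which is precisely the ``extra input from a minimal free resolution'' you flag at the end. So you have correctly identified the crux, but it is worth being explicit that this algebraic input is not a supplement to the link induction --- it replaces it and carries the whole global burden of the argument.
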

	\subsection{Rigidity Theory}
	We give a short presentation of rigidity theory that will be used in later sections. Let $G=(V,E)$ be a graph. A \emph{$d$-embedding} is a map $\phi: V\to \R^d$. It is called \emph{rigid} if there exists an $\epsilon>0$ such that if $\psi: V\to \R^d$ satisfies $\dist(\phi(u), \psi(u))<\epsilon$ for every $u\in V$ and $\dist(\psi(u), \psi(v))=\dist(\phi(u), \phi(v))$ for every $\{u,v\}\in E$, then $\dist(\psi(u), \psi(v))=\dist(\phi(u), \phi(v))$ for every $u,v\in V$. (Here $\dist$ denotes the Euclidean distance.) A graph $G$ is called \emph{generically $d$-rigid} if the set of rigid $d$-embeddings of $G$ is open and dense in the set of all $d$-embeddings of $G$. 
	
Given a graph $G$ and a $d$-embedding $\phi$ of $G$, we define the matrix $\mathrm{Rig}(G, \phi)$ associated with a graph $G$ as follows: it is an $f_1(G) \times df_0(G)$ matrix with rows labeled by edges of $G$ and columns grouped in blocks of size $d$, with each block labeled by a vertex of $G$; the row corresponding to $\{u,v\}\in E$ contains the vector $\phi(u)-\phi(v)$ in the block of columns corresponding to $u$, the vector $\phi(v)-\phi(u)$ in columns corresponding to $v$, and zeros everywhere else. It is easy to see that for a generic $\phi$ the dimensions of the kernel and image of $\mathrm{Rig}(G, \phi)$ are independent of $\phi$. Hence we define the \emph{rigidity matrix} of $G$ as $\mathrm{Rig}(G, d)=\mathrm{Rig}(G, \phi)$ for a generic $\phi$. Given a $d$-embedding $f: V\to \R^d$, a stress with respect to $f$ is a function $w:E\to \R$ such that for every vertex $v\in V$ 
	\[ \sum_{u:\{v,u\}\in E} w(\{v,u\})(f(v)-f(u))=0.\]
	We say that an edge $\{u,v\}$ participates in a stress $w$ if $w(\{u,v\})\neq 0$, and that a vertex $v$ participates in $w$ if there is an edge that participates in $w$ and contains $v$. The following three lemmas summarize a few basic results of rigidity theory. For a simplicial complex $\Delta$, we denote the graph of $\Delta$ (equivalently, the 1-skeleton of $\Delta$) by $G(\Delta)$. We say a simplicial complex $\Delta$ is generically $d$-rigid if $G(\Delta)$ is generically $d$-rigid.
	\begin{lemma}\label{lm: basic facts }
		Let $\Delta$ be a simplicial complex.
		\begin{enumerate}\label{lm: basic rigidity}
			\item \rm{(Cone lemma, \cite{Whiteley}, \cite{K} and \cite{T})} For an arbitrary $v\in V(\Delta)$ and any $d$, $\lk_\Delta v$ is generically $(d-1)$-rigid if and only if $\st_\Delta v$ is generically $d$-rigid. 
			\item \rm{(\cite{F})} If $\Delta$ is a normal $(d-1)$-pseudomanifold, then $\Delta$ is generically $d$-rigid.
			\item If $\Delta$ is generically $d$-rigid, then $g_2(\Delta)=\dim {\rm{LKer}}(\mathrm{Rig}(\Delta,d))$, where ${\rm{LKer}}(M)$ is the left null space of a matrix $M$.			
		\end{enumerate}
	\end{lemma}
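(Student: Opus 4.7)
The plan is to handle the three parts separately, since each is a classical fact from rigidity theory with a different flavor of proof. For (1), I would argue by direct comparison of rigidity matrices. Take a generic $(d-1)$-embedding $\phi\colon V(\lk_\Delta v)\to \R^{d-1}$, identify $\R^{d-1}$ with the hyperplane $H=\{x_d=0\}\subset \R^d$, and extend to a $d$-embedding $\phi'$ of $\st_\Delta v$ by placing $v$ generically off $H$. Any infinitesimal motion of $\phi'(\st_\Delta v)$ can be normalized so that $v$ is fixed and so that the stabilizer of $v$ in the group of Euclidean motions is factored out; under this normalization, projecting the motion on the link vertices onto $H$ gives a linear bijection between infinitesimal flexes of $\st_\Delta v$ in $\R^d$ modulo rigid motions and infinitesimal flexes of $\lk_\Delta v$ in $\R^{d-1}$ modulo rigid motions. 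Generic rigidity is equivalent to this space being zero, so the two conditions coincide.

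For (2), this is Fogelsanger's theorem and I would not reprove it. His argument inducts on the number of vertices by decomposing the fundamental cycle of a normal pseudomanifold into minimal (non-decomposable) cycles, showing inductively that the graph carrying each minimal cycle is generically $d$-rigid, and then combining rigidity along their intersections via the standard gluing lemma (two generically $d$-rigid subgraphs sharing at least $d$ vertices have a generically $d$-rigid union). The cone lemma from part (1) enters in the inductive step to reduce the rigidity of a vertex star to that of its link.

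For (3), I would argue directly from the rigidity matrix. By definition $\dim \mathrm{LKer}(\mathrm{Rig}(\Delta,d))=f_1(\Delta)-\mathrm{rank}(\mathrm{Rig}(\Delta,d))$. Generic $d$-rigidity of $G(\Delta)$ means precisely that the kernel of $\mathrm{Rig}(\Delta,d)$ coincides with the $\binom{d+1}{2}$-dimensional space of infinitesimal rigid motions of $\R^d$, provided $f_0(\Delta)\geq d+1$, so the column rank is $df_0(\Delta)-\binom{d+1}{2}$ and the left null space has dimension $f_1(\Delta)-df_0(\Delta)+\binom{d+1}{2}$. Expanding $h_1=f_0-d$ and $h_2=f_1-(d-1)f_0+\binom{d}{2}$ yields
\[
g_2(\Delta)=h_2-h_1=f_1(\Delta)-df_0(\Delta)+\binom{d}{2}+d=f_1(\Delta)-df_0(\Delta)+\binom{d+1}{2},
\]
which matches the dimension above. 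The main obstacle is clearly part (2), which is the deep theorem and which I would only cite; parts (1) and (3) reduce to essentially linear-algebraic bookkeeping once the rigidity-matrix formalism is in place.
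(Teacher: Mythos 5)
The paper does not actually prove this lemma; it is stated as a collection of citations (Kalai/Tay for the cone lemma, Fogelsanger for generic rigidity of normal pseudomanifolds), and the identity in part~(3) is folklore bookkeeping that the paper also leaves unproved. Your proposal is therefore not really competing with a proof in the text, but it is a reasonable account of why each part holds.

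Your treatment of part~(3) is correct and complete: $\dim\mathrm{LKer}(\mathrm{Rig}(\Delta,d))=f_1-\mathrm{rank}(\mathrm{Rig}(\Delta,d))$, generic $d$-rigidity with $f_0\geq d+1$ gives $\mathrm{rank}=df_0-\binom{d+1}{2}$, and the algebra $h_2-h_1=f_1-df_0+\binom{d}{2}+d=f_1-df_0+\binom{d+1}{2}$ matches. Your decision to cite rather than reprove part~(2) is the right call; the one-paragraph summary of Fogelsanger's minimal-cycle decomposition and the $d$-vertex gluing lemma is accurate.

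The weakest point is your sketch of part~(1). ``Projecting the motion on the link vertices onto $H$'' is not by itself a bijection between the two flex spaces: an infinitesimal flex of $\st_\Delta v$ with $v$ pinned constrains each link vertex to move tangentially to the sphere centered at $\phi'(v)$, not tangentially to $H$, and the radial (toward-$v$) component of the motion of a link vertex is unconstrained by the link edges alone. The standard fix is either to place $v$ ``at infinity'' via a projective transformation so that cone edges become parallel and the tangency condition really does become motion within a fixed hyperplane, or to argue directly with stresses: a $d$-stress of the cone restricts to a $(d-1)$-stress of the link after subtracting the cone-edge contributions, and conversely a $(d-1)$-stress of the link lifts to a $d$-stress of the cone by solving for the cone-edge coefficients, giving an isomorphism of stress spaces (and hence equality of $g_2$, which together with a count of ranks gives the rigidity equivalence). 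As written, your argument has a genuine gap in identifying the flex spaces, though it gestures at the right construction; filling it requires one of these two devices.
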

	The next lemma was originally stated in \cite{NN} for the class of homology spheres. Since the proof given in \cite{NN} only uses the fact that vertex links of these complexes are generically $(d-1)$-rigid and that the facet-ridge graph of the antistar of any vertex is connected, part 2 of Lemma \ref{lm: basic rigidity} allows us to generalize the statement to the class of normal pseudomanifolds. (For details about facet-ridge graphs of normal pseudomanifolds and their connectivity, see, for instance, \cite[Section 2]{BD}.)
	\begin{lemma}\label{lm: NN, vertex stress}
	 	\rm{(\cite[Proposition 2.10]{NN})} Let $d\geq 4$ and let $\Delta$ be a prime normal $(d-1)$-pseudomanifold. Then every vertex $u\in \Delta$ participates in a generic $d$-stress of the graph of $\Delta$.
	\end{lemma}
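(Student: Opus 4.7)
The plan is to adapt verbatim Nevo and Novinsky's proof of \cite[Proposition 2.10]{NN}, checking that the two structural facts it relies on hold in the broader prime normal pseudomanifold setting: (i) every vertex link of $\Delta$ is generically $(d-1)$-rigid, which follows from Lemma \ref{lm: basic rigidity}(2) since the link is itself a normal $(d-2)$-pseudomanifold of dimension at least $2$ (as $d\geq 4$); and (ii) the facet--ridge graph of ${\rm ast}_\Delta(u)$ is connected, which under primeness follows as in \cite[Section 2]{BD}.

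Suppose, for contradiction, that $u$ participates in no generic $d$-stress of $G(\Delta)$. A direct check of the stress equation at $u$ and at each $v\in \lk_\Delta u$ shows that restriction to edges not incident to $u$ and extension by zero on edges through $u$ are mutually inverse linear isomorphisms between the generic $d$-stress space of $G(\Delta)$ and that of $G({\rm ast}_\Delta(u))$; hence these two spaces have equal dimension.

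The central step is to prove generic $d$-rigidity of $G({\rm ast}_\Delta(u))$. By (i) and the cone lemma (Lemma \ref{lm: basic rigidity}(1)), every star $\st_\Delta v$ is generically $d$-rigid. For each edge $\{v_1,v_2\}\in{\rm ast}_\Delta(u)$, the vertex intersection $V(\st_\Delta v_1)\cap V(\st_\Delta v_2)$ contains $\{v_1,v_2\}\cup V(\lk_\Delta\{v_1,v_2\})$; the link $\lk_\Delta\{v_1,v_2\}$ is a $(d-3)$-pseudomanifold and hence has at least $d-2$ vertices, giving an overlap of at least $d$. The standard rigidity gluing lemma then merges the two stars into a generically $d$-rigid subcomplex, and iterating along the connected facet--ridge graph of ${\rm ast}_\Delta(u)$ (ingredient (ii)) produces generic $d$-rigidity of $G({\rm ast}_\Delta(u))$.

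Rigidity of both $G(\Delta)$ and $G({\rm ast}_\Delta(u))$, combined with Lemma \ref{lm: basic rigidity}(3) and the stress-space isomorphism above, yields
\[
g_2(\Delta) = f_1({\rm ast}_\Delta(u)) - d\,f_0({\rm ast}_\Delta(u)) + \tbinom{d+1}{2} = g_2(\Delta) + d - \deg_\Delta u,
\]
so $\deg_\Delta u = d$. Then $\lk_\Delta u$ is a normal $(d-2)$-pseudomanifold on exactly $d$ vertices and hence equals $\partial\sigma^{d-1}$, so the $d$-set $V(\lk_\Delta u)$ has every proper subset in $\Delta$ while itself not being a face (the alternative $\Delta = \partial\sigma^d$ has $g_2 = 0$, rendering the claim vacuous). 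Thus $V(\lk_\Delta u)$ is a missing facet of $\Delta$, contradicting primeness. The main obstacle is the gluing step: the stars being glued live in $\Delta$ rather than in ${\rm ast}_\Delta(u)$, so some care is required to ensure that their truncations to ${\rm ast}_\Delta(u)$ still overlap in at least $d$ vertices --- this is where $d\geq 4$ and the facet--ridge connectivity both enter essentially.
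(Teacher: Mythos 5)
The paper does not actually supply a proof of this lemma: it simply cites \cite[Proposition 2.10]{NN} and remarks that the argument there carries over to normal pseudomanifolds once one replaces the homology-sphere input by the two facts that (a) vertex links are generically $(d-1)$-rigid, by Fogelsanger via Lemma~\ref{lm: basic rigidity}(2), and (b) the facet--ridge graph of any antistar is connected, as in \cite[Section 2]{BD}. Your identification of exactly these two ingredients matches the paper's (brief) reasoning, so at that level you and the paper are in agreement.

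You then go further and try to reconstruct the actual Nevo--Novinsky argument, and here there is a genuine gap, which you yourself flag but do not close. The problem is precisely the one you name: after passing to ${\rm ast}_\Delta(u)$ the glued pieces you work with are the stars $\st_\Delta v_1,\st_\Delta v_2$ \emph{inside} $\Delta$. When $u$ lies in both $\lk_\Delta v_1$ and $\lk_\Delta v_2$ (which is common), the common vertex set restricted to $V(\Delta)\setminus\{u\}$ may have only $d-1$ elements (the link of an edge in a normal $(d-1)$-pseudomanifold is only guaranteed $d-2$ vertices, so $\{v_1,v_2\}\cup V(\lk_\Delta\{v_1,v_2\})$ has $\geq d$ vertices, but one of them could be $u$). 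The gluing lemma then does not apply directly to the truncated stars. Moreover, the phrase "iterating along the connected facet--ridge graph of ${\rm ast}_\Delta(u)$" is a mismatch: the facet--ridge graph connects top-dimensional faces, while you are gluing vertex stars along edges, and some argument is needed to convert one kind of connectivity into the other. Acknowledging that "some care is required" is not the same as supplying it, and the lemma is not true without an argument at exactly this point.

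Two smaller remarks. First, as literally stated the lemma fails for $\Delta=\partial\sigma^d$: it is prime, it is a normal $(d-1)$-pseudomanifold with $d\geq 4$, and it has $g_2=0$, hence no generic $d$-stresses at all, so no vertex participates in any. Your contradiction argument correctly terminates at $\Delta=\partial\sigma^d$, but you label that case "vacuous" rather than recognizing it as a genuine counterexample that must be excluded from the hypothesis (in all of the paper's applications $g_2\geq 1$, so this is harmless in practice, but it is worth stating). Second, in the displayed identity the middle expression $f_1({\rm ast}_\Delta(u))-d\,f_0({\rm ast}_\Delta(u))+\binom{d+1}{2}$ equals $\dim{\rm LKer}(\mathrm{Rig}({\rm ast}_\Delta(u),d))$ only after you have established generic $d$-rigidity of $G({\rm ast}_\Delta(u))$; since that is the unresolved step, the identity is currently circular.
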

	The following result is proved in Kalai's paper \cite[Theorem 7.3]{K}.
	\begin{lemma}\label{lm: coning stress}
		Let $d\geq 4$. For any generically $d$-rigid pure $(d-1)$-dimensional simplicial complex $\Delta$, $g_2(\lk_\Delta v)\leq  g_2(\Delta)$.
	\end{lemma}
	\section{Retriangulations of simplicial complexes}
	A \emph{triangulation} of a topological space $M$ is any simplicial complex
	$\Delta$ such that the geometric realization of $\Delta$ is homeomorphic to $M$. In this section, we introduce three operations that produce new triangulations of the original topological space. We will use these operations extensively to characterize homology manifolds with $g_2\leq 2$. The first one is called the central retriangulation, see \cite[Section 5]{S2}.
    \begin{definition}
    	Let $\Delta$ be a $d$-dimensional simplicial complex and $B$ be a subcomplex of $\Delta$; assume also that $B$ is a simplicial $d$-ball. The \emph{central retriangulation} of $\Delta$ along $B$, denoted as $\crtr_B(\Delta)$, is the new complex we obtain after removing all of the interior faces of $B$ and replacing them with the interior faces of the cone on the boundary of $B$, where the cone point is a new vertex $u$. 
    \end{definition}
     \begin{figure}[h]
      	\centering
      	\includegraphics{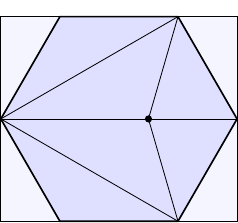}
      	\hspace{25mm}
      	\includegraphics{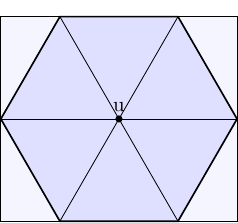}
      	\caption{Central retriangulation along a subcomplex $B$ (the darker blue region), where $B$ has six interior edges, one interior vertex, and $\partial B$ is a 6-cycle.}
      \end{figure}
     Recall that the \emph{stellar subdivision} of a simplicial complex $\Delta$ at the face $\tau$ is \[\sd_\tau(\Delta)=(\Delta\backslash\tau)\cup(u*\partial(\st_\Delta \tau)),\] where $u$ is the newly added vertex. It immediately follows from the definition that $\crtr_{\st_\Delta \tau}(\Delta)=\sd_\tau(\Delta)$. In this paper, we will mainly discuss central retriangulations of $\Delta$ along an $(r-1)$-stacked ($2\leq r\leq d/2$) subcomplex. The following lemma indicates how the $g$-vector changes under central retriangulations.
    \begin{lemma}\label{lm: g_k central retriangulation}
    	Let $\Delta$ be a $d$-dimensional simplicial complex and $B\subseteq\Delta$ be an $(r-1)$-stacked $d$-dimensional ball, where $2\leq r\leq d/2$. Then $g_i(\crtr_B(\Delta))=g_i(\Delta)+g_{i-1}(\partial B)$ for $1\leq i\leq d/2$.
    \end{lemma}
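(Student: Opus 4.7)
The plan is to translate the definition of central retriangulation into an $f$-vector identity, use the $(r-1)$-stacked hypothesis to annihilate the interior terms that $g_i$ cannot see, and then recognize the remainder as the $h$-vector of $\partial B$. The starting observation is that $\crtr_B(\Delta)$ decomposes as $(\Delta\setminus \intr B)\cup(u*\partial B)$, glued along $\partial B$, so a direct face count gives
\[
 f_j(\crtr_B(\Delta))=f_j(\Delta)-f_j(\intr B)+f_{j-1}(\partial B)
\]
for every $j\ge 0$, where I use the convention $f_{-1}(\partial B)=1$ to account for the new apex vertex $u$ when $j=0$.

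Next I would invoke $(r-1)$-stackedness of $B$, which by definition says $f_k(\intr B)=0$ for all $k\le d-r$. Since $r\le d/2$ gives $d-r\ge d/2$, for every $1\le i\le d/2$ and every $f$-index $k\le i-1$ appearing in the defining formula of $h_i(\Delta)$, the term $f_k(\intr B)$ vanishes. Substituting into the $h$-vector formula for the $d$-dimensional complex $\Delta$, and noting that only the contributions from $j\ge 1$ survive (since $f_{-1}$ is unchanged), I obtain
\[
 h_i(\crtr_B(\Delta))-h_i(\Delta)=\sum_{j=1}^{i}(-1)^{i-j}\binom{d+1-j}{i-j}\,f_{j-2}(\partial B).
\]
Re-indexing $k=j-1$ converts the right-hand side into $\sum_{k=0}^{i-1}(-1)^{i-1-k}\binom{d-k}{i-1-k}f_{k-1}(\partial B)$, which is exactly $h_{i-1}(\partial B)$ because $\partial B$ is $(d-1)$-dimensional. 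Telescoping the definition $g_i=h_i-h_{i-1}$ then yields $g_i(\crtr_B(\Delta))-g_i(\Delta)=h_{i-1}(\partial B)-h_{i-2}(\partial B)=g_{i-1}(\partial B)$.

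The proof is essentially bookkeeping; the only substantive sanity checks are that (i) for $1\le i\le d/2$ every interior-face index $k\le i-1$ appearing in $h_i$ satisfies $k\le d-r$, so stackedness eliminates it, and (ii) the binomial shift from $\binom{d+1-j}{i-j}$ (for the $d$-dimensional $\Delta$) to $\binom{d-k}{i-1-k}$ (for the $(d-1)$-dimensional $\partial B$) is precisely what the substitution $k=j-1$ produces. Neither rigidity theory nor any deeper combinatorics is needed.
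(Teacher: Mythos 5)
Your proof is correct and takes essentially the same route as the paper: derive the $f$-vector relation for $\crtr_B(\Delta)$, use $(r-1)$-stackedness to annihilate the interior-face terms in the range of indices that $g_i$ sees (which works because $i-1 \le d/2 - 1 < d - r$), and then recognize the resulting binomial shift as the $g$- (or $h$-) vector formula for the $(d-1)$-dimensional complex $\partial B$. The paper phrases the final step directly in terms of a closed formula for $g_j$ while you telescope through the $h$-vector, but these are equivalent bookkeeping; incidentally, the formula as printed in the paper has a typo ($\binom{d+2-i}{j-i}$ should be $\binom{d+1-i}{j-i}$), which your $h$-vector route avoids.
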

    \begin{proof}
    	Since $B$ is $(r-1)$-stacked, $B$ has no interior faces of dimension $\leq  d-r$. Hence by the definition of central retriangulation, $f_i(\crtr_B(\Delta))=f_i(\Delta)+f_{i-1}(\partial B)$ for $0\leq i\leq d-r$. Now use the formula $g_j(\Gamma)=\sum_{i=0}^{j}(-1)^{j-i}\binom{d+2-i}{j-i}f_{i-1}(\Gamma)$, where $d=\dim\Gamma+1$, together with an observation that $\dim \partial B=\dim \Delta-1$ to obtain $g_i(\crtr_B(\Delta))=g_i(\Delta)+g_{i-1}(\partial B)$. 
    \end{proof}
    
    If $P$ is a $d$-polytope, $H$ a supporting hyperplane of $P$ such that $H^+$ is the closed half-space determined by $H$ that contains $P$, and $v\in \mathbb{R}^d\backslash H$, then we say that $v$ is \emph{beneath} $H$ (with respect to $P$) if $v\in H^+$ and $v$ is \emph{beyond} $H$ if $v\notin H^+$. In the following lemma we denote the set of missing $k$-faces of $\Delta$ by $M_k(\Delta)$.
    \begin{lemma}\label{lm: other prop of central retriangulation}
    	Let $\Delta$ be a homology $d$-manifold and $\tau$ be an $i$-face of $\Delta$. Then the following holds:
    	\begin{enumerate}
    	\item If $i>d/2$, then $\st_\Delta \tau$ is a $(d-i)$-stacked homology ball.
    	\item $M_k(\sd_\tau(\Delta))=\big(M_k(\Delta)-\{F\in  M_k(\Delta)\mid\tau\subseteq F\}\big)\cup \{u*F\mid F\in\Delta,\; F\in M_{k-1}(\st_\Delta \tau)\}$. Here $u$ is the new vertex of the retriangulation.
    	\item If $\Delta$ is a polytopal $d$-sphere, then $\sd_\tau(\Delta)$ is also a polytopal $d$-sphere. 
    	\end{enumerate}
    \end{lemma}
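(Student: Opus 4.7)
The overall plan is to leverage the join description $\st_\Delta \tau = \bar\tau * \lk_\Delta \tau$ for Parts 1 and 2, and to carry out a standard beneath-beyond construction for Part 3.

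For Part 1, since $\Delta$ is a homology $d$-manifold, $\lk_\Delta \tau$ is a homology $(d-i-1)$-sphere, and hence its join with the simplex $\bar\tau$ is a homology $d$-ball with boundary $\partial\bar\tau * \lk_\Delta \tau$. The interior faces of this ball are precisely those containing $\tau$, the smallest being $\tau$ itself of dimension $i$. Setting $r = d - i + 1$, this says $\st_\Delta\tau$ has no interior faces of dimension $\leq d - r$, so it is $(d-i)$-stacked; the hypothesis $i > d/2$ is exactly what guarantees $2 \leq r \leq d/2$.

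For Part 2, I would split a missing $k$-face $F$ of $\crtr_B(\Delta)$ (where $B = \st_\Delta\tau$) into the cases $u \notin F$ and $u \in F$. In the first case $F \subseteq V(\Delta)$: if $\tau \not\subseteq F$, then $F$ is missing in $\crtr_B(\Delta)$ exactly when it is missing in $\Delta$, giving the first piece of the formula; if $\tau \subsetneq F$, the removed face $\tau$ is a proper subface of $F$, contradicting missingness (and $F = \tau$ itself also arises as a missing $i$-face, a small addendum to the formula as stated). In the second case, write $F = \{u\} \cup F'$ with $F' \in \Delta$, and decompose $F' = F_1' \cup F_2'$ with $F_1' \subseteq \tau$ and $F_2' \subseteq V(\lk_\Delta\tau)$; the demand that $\{u\} \cup F''$ be a face of $\crtr_B(\Delta)$ for every $F'' \subsetneq F'$ forces $F_1' = \emptyset$, since otherwise removing an element of $F_1'$ would yield $F_2' \in \lk_\Delta\tau$ and hence $F' \in \partial B$. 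Thus $F' \subseteq V(\lk_\Delta\tau)$ is a missing face of $\lk_\Delta\tau$; the same analysis shows $M_{k-1}(\st_\Delta\tau) = M_{k-1}(\lk_\Delta\tau)$, matching the second piece of the formula.

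For Part 3, write $\Delta = \partial P$ for a simplicial $(d+1)$-polytope $P$. Pick a point $x$ in the relative interior of $\tau$ and a direction $v$ in the relative interior of the outer normal cone of $\tau$ in $P$ (a cone of dimension $d + 1 - i \geq 1$), and set $u = x + \epsilon v$. For sufficiently small $\epsilon > 0$, a facet $G$ of $P$ has $u$ beyond its supporting hyperplane if and only if $\tau \subseteq G$, because the chosen perturbation crosses precisely those hyperplanes that already contain $\tau$. The beneath-beyond theorem then gives that $\conv(P \cup \{u\})$ is a simplicial polytope whose boundary is obtained from $\partial P$ by deleting the visible facets together with their interior lower-dimensional subfaces (exactly the faces containing $\tau$) and adjoining $u * \sigma$ for every $\sigma \in \partial \st_\Delta\tau$; this coincides with $\crtr_{\st_\Delta\tau}(\Delta)$. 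The main technical step is the perturbation analysis in Part 3, which rests on the fact that the outer normal cone at $\tau$ has a nonempty relative interior furnishing a direction that crosses exactly the hyperplanes through $\tau$; the combinatorial bookkeeping in Part 2 is the most delicate piece but is essentially routine once the join decomposition is in hand.
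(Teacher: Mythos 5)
Your Parts 1 and 2 expand on what the paper dispatches as ``follow from the definitions,'' and the arguments are correct. In particular, your observation in Part 2 that $\tau$ itself becomes a missing $i$-face of $\crtr_{\st_\Delta\tau}(\Delta)$ and is not accounted for by the displayed formula is a genuine (if harmless) omission in the statement: $\tau \notin M_i(\Delta)$ since $\tau\in\Delta$, and $\tau$ contains no $u$, yet all of $\tau$'s proper subsets survive in $\partial\bar\tau\subseteq\partial(\st_\Delta\tau)$. The paper only invokes Part 2 for missing facets, i.e.\ $k=d-1>i$, so nothing downstream is affected.

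Part 3 follows the same beneath-beyond outline as the paper, but your explicit construction of the new point has a genuine gap. You take $x$ in the relative interior of $\tau$ and a direction $v$ in the relative interior of the normal cone $N(\tau)=\mathrm{cone}(n_G:\tau\subseteq G)$, and assert that $u=x+\epsilon v$ is beyond $H_G$ precisely when $\tau\subseteq G$. For $G\supseteq\tau$ we have $x\in H_G$, so $u$ is beyond $H_G$ if and only if $\langle v,n_G\rangle>0$ -- and this need \emph{not} hold for every $v\in\mathrm{relint}\,N(\tau)$. Already in the plane, take the sharp vertex $\tau=(0,10)$ of a polygon with incident edges through $(-1,0)$ and $(1,0)$; the edge normals are $n_1\propto(-10,1)$ and $n_2\propto(10,1)$, and $v=(9,1)\in\mathrm{relint}\,N(\tau)$, yet $\langle v,n_1\rangle=-89<0$, so $u=x+\epsilon v$ lands \emph{beneath} $H_1$. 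The standard repair is the pulling construction: fix an interior point $p_0$ of $P$ and set $u=x+\epsilon(x-p_0)$. Then for every facet $G\supseteq\tau$ one has $\langle x-p_0,n_G\rangle=c_G-\langle p_0,n_G\rangle>0$, so $u$ is beyond $H_G$; while for $G\not\supseteq\tau$ the strict inequality $\langle x,n_G\rangle<c_G$ keeps $u$ beneath $H_G$ once $\epsilon$ is small. With that substitution your beneath-beyond step goes through and produces exactly the point $p$ whose existence the paper asserts.
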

    \begin{proof}
    	Part 1 and 2 follow from the definitions. For part 3, we let $P$ be a $d$-polytope whose boundary complex coincides with $\Delta$ and we let $H_F$ be the supporting hyperplane of a facet $F$. There exists a point $p\in \mathbb{R}^d$ such that $p$ is beyond all hyperplanes $H_F$ for $\tau\subseteq F$, and beneath all $H_F$ for $\tau\notin F$. Then by \cite[Theorem 1 in Section 5.2]{G}, $\sd_\tau(\Delta)$ is the boundary complex of $\conv(V(\Delta)\cup\{p\})$, which is a polytope.
    \end{proof}
    
    Next we introduce the second retriangulation, which in a certain sense is the inverse of central retriangulation along an $(r-1)$-stacked subcomplex.
    \begin{definition}
    	Let $\Delta$ be a $d$-dimensional simplicial complex. Assume that there is a vertex $v\in V(\Delta)$ such that $\lk_\Delta v$ is an $(r-1)$-stacked homology $(d-1)$-sphere. If no interior face of $(\lk_\Delta v)(r-1)$ is a face of $\Delta$, then define the \emph{inverse stellar retriangulation} on vertex $v$ by
    	\[\sd_v^{-1}(\Delta)=(\Delta\backslash\{v\})\cup(\lk_\Delta v)(r-1).\]In other words, we replace the star of $v$ with the ball $(\lk_\Delta v)(r-1)$. It is easy to see that $\sd_v^{-1}(\Delta)$ is PL-homeomorphic to $\Delta$. Using the same argument as in Lemma \ref{lm: g_k central retriangulation}, we prove the following result.
    \end{definition}
    
    \begin{lemma}\label{lm: g_k inverse retriangulation}
    	Let $\Delta$ be a $d$-dimensional simplicial complex. If $\lk_\Delta v$ is an $(r-1)$-stacked homology $(d-1)$-sphere for some $2\leq r\leq \frac{d+1}{2}$ and no interior face of $(\lk_\Delta v)(r-1)$ is a face of $\Delta$, then $\sd_v^{-1}(\Delta)$ is well-defined and $g_i(\sd_v^{-1}(\Delta))=g_i(\Delta)-g_{i-1}(\lk_\Delta v)$ for $1\leq i\leq d/2$.
    \end{lemma}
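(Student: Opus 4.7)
The plan is to follow essentially the template of the proof of Lemma~\ref{lm: g_k central retriangulation}, with signs flipped because $\sd_v^{-1}$ removes the open star of $v$ rather than attaching new interior faces.

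First I would verify that $\sd_v^{-1}(\Delta)$ is a bona fide simplicial complex. Since $\lk_\Delta v$ is an $(r-1)$-stacked polytopal $(d-1)$-sphere, we have $g_r(\lk_\Delta v)=0$, and Theorem~\ref{thm: GLBT} then says that $B:=(\lk_\Delta v)(r-1)$ is a simplicial $d$-ball whose boundary is $\lk_\Delta v$. The hypothesis that no interior face of $B$ already lies in $\Delta$ guarantees that gluing $B$ onto $\mathrm{ast}_\Delta v$ along their common boundary $\lk_\Delta v$ introduces no repeated or ill-formed faces, so $\sd_v^{-1}(\Delta)=\mathrm{ast}_\Delta v\cup B$ really is a simplicial complex (and by construction PL-homeomorphic to $\Delta$).

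Next I would track face numbers dimension by dimension. Every face of $\Delta$ containing $v$ has the form $\{v\}\cup\sigma$ for a unique $\sigma\in\lk_\Delta v$, so the passage from $\Delta$ to $\mathrm{ast}_\Delta v$ drops $f_i$ by exactly $f_{i-1}(\lk_\Delta v)$. Meanwhile, since $B$ is $(r-1)$-stacked it has no interior faces of dimension $\leq d-r$, so in that range the only net effect of $\sd_v^{-1}$ is the removal above. This yields the key identity
\[ f_i(\sd_v^{-1}(\Delta))=f_i(\Delta)-f_{i-1}(\lk_\Delta v)\quad\text{for } 0\leq i\leq d-r. \]

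Finally I would substitute this into the same $f$-to-$g$ expansion used in the proof of Lemma~\ref{lm: g_k central retriangulation}, form the difference $g_j(\sd_v^{-1}(\Delta))-g_j(\Delta)$, and re-index by $k=i-1$ to recognize the result as precisely $-g_{j-1}(\lk_\Delta v)$, noting that $\dim(\lk_\Delta v)=\dim\Delta-1$ produces the correct shift in the binomial coefficient. The one bookkeeping point worth checking, and the main subtlety of the argument, is that the face-number identity must hold for every index appearing in the expansion of $g_j$: for $1\leq j\leq d/2$ we need $j\leq d-r+1$, and this is exactly where the hypothesis $r\leq (d+1)/2$ is used, since it gives $d-r+1\geq (d+1)/2\geq d/2\geq j$.
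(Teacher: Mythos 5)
Your proposal is correct and follows exactly the approach the paper intends: the paper does not write out a proof of this lemma but instead remarks that it follows ``Using the same argument as in Lemma~\ref{lm: g_k central retriangulation},'' which is precisely what you carry out (with signs flipped, and with the additional bookkeeping check that $r \leq (d+1)/2$ gives $j \leq d-r+1$ for all $j \leq d/2$). Your filled-in version is, if anything, slightly more careful than the paper's terse reference, so no gaps.
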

    \begin{figure}[h]
    \centering
    \subfloat[A stacked vertex link, $\lk_\Delta v$, in a 3-dimensional complex $\Delta$]{\includegraphics[scale=1.4]{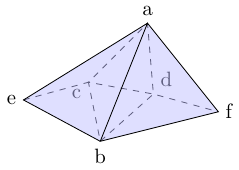}}
    \hspace{15mm}
    \subfloat[Two missing faces $\{a,b,c\}$ and $\{a,b,d\}$ in $\lk_\Delta v$, and three missing facets in $(\lk_\Delta v)\cup\{ \{a,b,c\} \cup \{a,b,d\}\}$]{\includegraphics[scale=1.4]{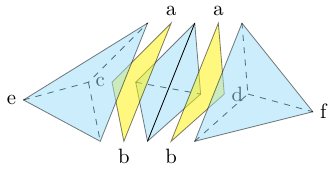}}
    \caption{Constructing $\sd_v^{-1}(\Delta)$ from $\Delta$: remove the vertex $v$ and add all five missing faces above to $\Delta$.}
    \end{figure}
    A similar retriangulation that reduces $g_2$ was introduced by Swartz \cite{S}. In contrast with the inverse stellar retriangulation, the number of vertices, or equivalently $g_1$, is not necessarily reduced in Swartz's operation.
    \begin{definition}
    	Let $\Delta$ be a $d$-dimensional simplicial complex such that one of the vertex links, $\lk_\Delta v$, is a homology $(d-1)$-sphere. If a missing facet $\tau$ of $\lk_\Delta v$ is also a missing face of $\Delta$, then we define the \textit{Swartz operation} on $(v,\tau)$ of $\Delta$ by first removing $v$, next adding $\tau$, then coning off two remaining homology spheres $S_1, S_2$ with two new vertices $v_1,v_2$. (Here $S_1$, $S_2$ are the two homology spheres such that their connected sum by identifying the face $\tau$ is $\lk_\Delta v$.) If one of the two spheres, say $S_1$, forms the boundary of a missing facet of $\Delta\cup\{\tau\}$, then we simply add this missing facet to $\Delta\cup\{\tau\}$ instead of coning off $S_1$ with $v_1$. The resulting complex is denoted by $\so_{v,\tau}(\Delta)$. If the dimension of $\Delta$ is at least three, then iterating this process, we are able to add all missing facets of $\lk_\Delta v$ to $\Delta$. The resulting complex is denoted by $\so_{v}(\Delta)$. 
    \end{definition}
    \begin{figure}[h]
    	\centering
    	 \subfloat[A 2-sphere $\Delta$: $\lk_\Delta v$ is a 6-cycle and $\tau=\{a,b\}$ is a missing edge of $\Delta$.]{\includegraphics[scale=1.215]{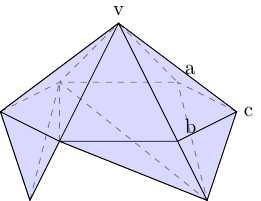}}
    	 \hspace{25mm}
    	 \subfloat[A retriangulation $\Delta'=\so_{v,\tau}(\Delta)$: add $\{a,b\}$ to $\Delta'$ and replace $\st_\Delta v$ with  $\st_{\Delta'} v'\cup \{a,b,c\}$]{\includegraphics[scale=1.215]{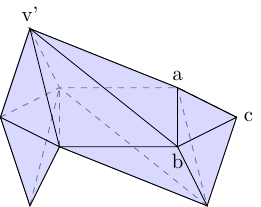}}
    	 \caption{The Swartz operation on a 2-sphere}
    \end{figure}
    Note that $\so_{v}(\Delta)$ is indeed well-defined since the construction is independent of the order of missing facets of $\lk_\Delta v$ chosen. Also $\so_v(\Delta)$ is PL-homeomorphic to $\Delta$, and if $\lk_\Delta v$ is a stacked sphere of dimension $\geq 3$, then $\so_v(\Delta)=\sd_v^{-1}(\Delta)$. If $\Delta$ is of dimension $\geq 3$, then since $g_2(\so_{v,\tau}(\Delta))=g_2(\Delta)-1$ by \cite{S}, we obtain that \[g_2(\so_v(\Delta))=g_2(\Delta)-\#\{\mathrm{missing}\; \mathrm{facets}\;\mathrm{of}\;\lk_\Delta v\}.\]
    \begin{lemma}\label{lm: g_k Swartz's operation}
    	Let $\Delta$ be a normal $(d-1)$-pseudomanifold for $d\geq 4$. If a vertex link, $\lk_\Delta v$, is a homology $(d-2)$-sphere and there are $k$ missing facets of $\lk_\Delta v$ that are not faces of $\Delta$, then $g_2(\Delta)\geq k$.
    \end{lemma}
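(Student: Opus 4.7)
The plan is to combine two ingredients that are already on the table: the displayed identity $g_2(\so_v(\Delta)) = g_2(\Delta) - k$ stated immediately before the lemma, and Kalai's lower bound inequality (Theorem \ref{thm: Kalai}) applied to the retriangulated complex.

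Concretely, since $d \geq 4$ we have $\dim \Delta = d-1 \geq 3$, so the iterated Swartz operation $\so_v$ is well-defined by the definition on the preceding page. Enumerate the $k$ missing facets of $\lk_\Delta v$ that are not faces of $\Delta$ as $\tau_1,\ldots,\tau_k$, and apply the single-step operations $\so_{v,\tau_i}$ in succession; after all $k$ steps we obtain $\so_v(\Delta)$, in which every $\tau_i$ has been inserted. By the displayed formula, each step lowers $g_2$ by exactly one, so $g_2(\so_v(\Delta)) = g_2(\Delta) - k$. Since $\so_v(\Delta)$ is PL-homeomorphic to $\Delta$, it is again a normal $(d-1)$-pseudomanifold, so Theorem \ref{thm: Kalai} yields $g_2(\so_v(\Delta)) \geq 0$. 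Combining the two relations gives $g_2(\Delta) \geq k$, as desired.

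The only substantive point to verify is the displayed formula itself, namely that each single-step Swartz operation changes $g_2$ by $-1$. This is an elementary $f$-vector computation via $g_2 = f_1 - d f_0 + \binom{d+1}{2}$: the operation removes $v$ and inserts two new cone vertices $v_1, v_2$ (net change $+1$ in $f_0$), inserts the missing facet $\tau$ together with the new edges incident to $v_1$ and $v_2$, and one tracks how the edges of $\st_\Delta v$ are partitioned between the two new stars. Since this calculation is already accepted in the remark preceding the lemma, the lemma's proof reduces to the combination argument above; the only implicit check is that after inserting some $\tau_i$, each remaining $\tau_j$ is still a missing facet of one of the two new vertex links and still missing from the updated complex, so that the next operation applies, and this is immediate from the description of the construction.
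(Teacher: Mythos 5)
Your proof is correct and follows essentially the route the paper intends: the paper states the identity $g_2(\so_v(\Delta)) = g_2(\Delta) - k$ and then writes ``hence we have'' before the lemma, leaving to the reader exactly the step you supply, namely that $\so_v(\Delta)$ is again a normal $(d-1)$-pseudomanifold (being PL-homeomorphic to $\Delta$) so Theorem~\ref{thm: Kalai} forces $g_2(\so_v(\Delta)) \ge 0$. Your $f$-vector verification of the per-step drop $\Delta g_2 = (d-1) - d\cdot 1 = -1$ is a useful sanity check, though the paper already grants this as the displayed identity.
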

    \section{From $g_2=0$ to $g_2=1$}
    The goal of this section is to provide an alternative proof of Theorem \ref{thm: NevoNovinsky} for the case of $d\geq 5$ but in a much larger class -- that of normal pseudomanifolds, see Theorem \ref{thm: g_2=1 normal pseudomanifold}. 
    
    If $\Delta$ is a stacked $(d-1)$-sphere, and $\tau$ is a face of $\Delta$ with the property that $\lk_\Delta \tau$ is the boundary complex of a simplex, then $\partial (\st_\Delta \tau)=\partial\tau*\lk_\Delta \tau$ is a join of two boundary complexes of simplices, and hence has $g_1=1$. Therefore by Lemma \ref{lm: g_k central retriangulation}, centrally retriangulating $\Delta$ along $\st_\Delta \tau$ results in a $(d-1)$-sphere with $g_2=1$. However, the resulting complex is not necessarily prime. In the rest of the paper, we denote by $\mathcal G_d$ the set of complexes that is either the join of $\partial \sigma^i$ and $\partial \sigma^{d-i}$, where $2\leq i\leq d-2$, or the join of $\partial \sigma^{d-2}$ and a cycle. The following lemma is a special case of Theorem 1(a) in \cite{BD}. We give a proof for the sake of completeness.
   \begin{lemma}\label{lm: f_0=d+2}
   	If $\Delta$ is a normal $(d-1)$-pseudomanifold on $d+2$ vertices, then $\Delta$ is the join of two boundary complexes of simplices.
   \end{lemma}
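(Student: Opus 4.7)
The plan is to encode the facet structure of $\Delta$ in an auxiliary graph on $V=V(\Delta)$ and read off the join decomposition from combinatorial constraints that the pseudomanifold condition imposes on this graph. Since $\Delta$ is pure of dimension $d-1$ and has $d+2$ vertices, every facet has exactly $d$ vertices and is of the form $V\setminus\{u,v\}$ for some $2$-subset $\{u,v\}\subseteq V$. Let $\Gamma$ be the graph on $V$ whose edges are precisely the $2$-subsets $e\subseteq V$ with $V\setminus e$ a facet of $\Delta$; since $\Delta$ is pure, it is completely determined by its facets and hence by $\Gamma$.

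The key step is to exploit the pseudomanifold condition. For any $3$-subset $T=\{u,v,w\}\subseteq V$, the only $d$-subsets of $V$ containing $\tau:=V\setminus T$ are $\tau\cup\{x\}$ for $x\in T$, and $\tau\cup\{x\}$ is a facet iff $T\setminus\{x\}\in E(\Gamma)$. If $\tau$ is a $(d-2)$-face of $\Delta$, the pseudomanifold property forces $\tau$ to lie in exactly $2$ facets; otherwise $\tau$ lies in no facet at all. Hence on every $3$-subset $T$, the graph $\Gamma$ has either $0$ or $2$ edges, never $1$ and never $3$.

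It remains to translate this dichotomy into structural information about the complement $\Gamma^c$. The ``never $1$ edge'' half of the condition means $\Gamma^c$ contains no induced $P_3$, so $\Gamma^c$ is a disjoint union of cliques $K_{n_1}\sqcup\cdots\sqcup K_{n_k}$; the ``never $3$ edges'' half means $\Gamma=K_{n_1,\ldots,n_k}$ is triangle-free, which forces $k\leq 2$. Since $\Delta$ has facets, $\Gamma$ is nonempty and $k=1$ is impossible; and if some $n_i=1$, the corresponding vertex would lie in no facet, contradicting purity. Therefore $k=2$, $V$ partitions as $A\sqcup B$ with $|A|,|B|\geq 2$, and the facets of $\Delta$ are exactly the sets $(A\setminus\{a\})\cup(B\setminus\{b\})$ for $a\in A$, $b\in B$, which coincide with the facets of the join $\partial\sigma^{|A|-1}\ast\partial\sigma^{|B|-1}$. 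The main cleverness lies in the choice of $\Gamma$, which converts the pseudomanifold condition into a clean forbidden-subgraph dichotomy; once that is set up, the rest is elementary graph theory.
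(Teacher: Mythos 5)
Your proof is correct, and it takes a genuinely different route from the paper's. The paper starts with a missing $i$-face $\sigma$ of $\Delta$, picks a ridge $\tau\subset\partial\sigma$, and uses a vertex count to force $\lk_\Delta\tau$ to be a boundary of a simplex; it then shows $\partial\sigma * \lk_\Delta\tau\subseteq\Delta$ and invokes the minimality of normal pseudomanifolds (no proper $(d-1)$-dimensional subcomplex of $\Delta$ can itself be a normal $(d-1)$-pseudomanifold) to conclude equality. Your proof instead dualizes: you record each facet by its complementary $2$-set, observe that the pseudomanifold condition becomes the parity constraint ``every $3$-set carries $0$ or $2$ edges of $\Gamma$,'' and then read off from the $P_3$-free and triangle-free conditions that $\Gamma$ is complete bipartite with both parts of size at least $2$. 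Your argument is more elementary and self-contained: it avoids the (true but nontrivial) fact that normal pseudomanifolds have no proper sub-pseudomanifolds, and in fact it uses only purity together with the ``each ridge in exactly two facets'' condition, so it establishes a slightly stronger statement (connectedness of $\Delta$ and of its face links is never invoked, and emerges automatically from the join structure). The paper's argument, in turn, is shorter once one is willing to cite the facet-ridge connectivity of normal pseudomanifolds, and its use of links fits naturally with the rest of the paper's toolbox.

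One small point worth making explicit in your write-up: when you say ``otherwise $\tau$ lies in no facet at all,'' the reason is that a $(d-1)$-element subset of $V$ that is not a face can have no face of $\Delta$ containing it, so none of $\tau\cup\{x\}$, $x\in T$, is a facet; this is what forces the count on each triple to be $0$ rather than some arbitrary value. You use it correctly, but it deserves a sentence.
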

   \begin{proof}
   	Let $\sigma$ be a missing $i$-face of $\Delta$, $1\leq i\leq d-1$. Then for any $(i-1)$-face $\tau\subseteq\partial\sigma$, $\lk_\Delta \tau$ is a normal $(d-i-1)$-pseudomanifold, and $d-i+1\leq f_0(\lk_\Delta \tau)\leq f_0(\Delta)-f_0(\sigma)=d-i+1$. Hence $\lk_\Delta \tau$ is the boundary complex of a $(d-i)$-simplex. Furthermore, $V(\sigma)\sqcup V(\lk_\Delta \tau)=V(\Delta)$ and the link of every $(i-1)$-face of $\sigma$ must be exactly $\lk_\Delta \tau$. This implies $\Delta\supseteq \partial\sigma*\lk_\Delta \tau$. Since $\partial\sigma*\lk_\Delta \tau$ is a normal $(d-1)$-pseudomanifold and no proper subcomplex of $\Delta$ can be a normal $(d-1)$-pseudomanifold, it follows that $\Delta=\partial\sigma*\lk_\Delta \tau$.
   \end{proof}
    \begin{proposition}\label{prop: recover prime g_2=1 cases}
    	Let $\Delta$ be a stacked $(d-1)$-sphere and let $\tau$ be a ridge. If $\sd_\tau(\Delta)$ is prime, then $\sd_\tau(\Delta)$ is the join of $\partial \sigma^{d-2}$ and a cycle. In particular, $\sd_\tau(\Delta)\in \mathcal{G}_d$. 
    \end{proposition}
    \begin{proof}
    	Assume that $\Delta=\Delta_1\#\Delta_2\#\cdots \#\Delta_{n+1}$, where $\Delta_1,\cdots, \Delta_{n+1}$ are boundary complexes of $d$-simplices. Assume further that $\tau_1,\cdots, \tau_n$ are the missing facets of $\Delta$. Since $\sd_\tau(\Delta)$ is prime, by part 2 of Lemma \ref{lm: other prop of central retriangulation}, every missing facet of $\Delta$ must contain $\tau$. So there exist distinct vertices $v_0,\cdots,v_{n+1}$ of $V(\Delta)$ such that $\tau_i=\tau\cup\{v_i\}$, $\Delta_1=\partial(v_0*\tau_1)$ and $\Delta_{n+1}=\partial(v_{n+1}*\tau_{n})$. It follows that $\Delta=\partial(\tau*P)$, where $P$ is the path $(v_0,v_1,\cdots,v_{n+1})$. Hence $\sd_\tau(\Delta)=\partial \tau *\tilde{P}\in \mathcal{G}_d$, where $\tilde{P}$ is the (graph) cycle obtained by adding the new vertex in $\sd_\tau(\Delta)$ and connecting it to the endpoints of $P$.				
        \end{proof}
        
        The next lemma (\cite[Corollary 1.8]{S3}) places restrictions on the first few $g$-numbers for normal pseudomanifolds. (See page 56 in \cite{St2} for definition of $g_i^{<k>}$ and $M$-vector.)
        \begin{lemma}\label{lm: g-vector of normal pseudo}
        	Let $\Delta$ be a normal $(d-1)$-pseudomanifold with $d\geq 4$. Then $g_3\leq g_2^{<2>}$. In particular, if $g_3\geq 0$, then $(1,g_1,g_2,g_3)$ is an $M$-vector.
        \end{lemma}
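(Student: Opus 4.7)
The plan is to exhibit $(1,g_1,g_2,g_3)$ as the first four values of the Hilbert function of a standard graded $\field$-algebra; Macaulay's classical theorem will then give the Macaulay bound $g_3 \leq g_2^{\langle 2 \rangle}$ for free, and also yield the stronger M-sequence conclusion when $g_3 \geq 0$, since an initial segment of an M-sequence with nonnegative entries is itself an M-sequence.

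To produce such an algebra I would work with the Stanley--Reisner ring $R=\field[\Delta]$. Pick a generic linear system of parameters $\theta_1,\dots,\theta_d$ and form the Artinian reduction $A = R/(\theta_1,\dots,\theta_d)$, so $\dim_\field A_i = h_i(\Delta)$. Then pick a further generic linear form $\omega \in A_1$ and set $B := A/\omega A$. Since $\Delta$ is a normal $(d-1)$-pseudomanifold with $d\geq 4$, Fogelsanger's theorem (Lemma \ref{lm: basic rigidity}(2)) gives generic $d$-rigidity of $G(\Delta)$, which is equivalent to $\cdot\omega: A_1 \to A_2$ being injective; in particular $\dim_\field B_1 = g_1$ and $\dim_\field B_2 = g_2$. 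The task thus reduces to establishing the one-degree-higher injectivity $\cdot\omega: A_2 \to A_3$, which gives $\dim_\field B_3 = g_3$; applying Macaulay's theorem to the standard graded algebra $B$ then yields $g_3 = \dim_\field B_3 \leq (\dim_\field B_2)^{\langle 2\rangle} = g_2^{\langle 2\rangle}$.

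The main obstacle is precisely this injectivity of $\cdot\omega: A_2 \to A_3$, because for a general normal pseudomanifold $A$ need not be Gorenstein and the hard Lefschetz theorem is unavailable as a black box. The approach I would take is by induction on $d$, using Swartz's identity (Lemma \ref{lm: Swartz}) with $k=2$,
\[\sum_{v\in V(\Delta)} g_2(\lk_\Delta v) = 3g_3(\Delta) + (d-2)g_2(\Delta),\]
noting that each link $\lk_\Delta v$ is itself a normal $(d-2)$-pseudomanifold and therefore satisfies $g_2(\lk_\Delta v) \geq 0$ by Theorem \ref{thm: Kalai}. Combining this identity with the cone lemma (Lemma \ref{lm: basic rigidity}(1)) and the stress-lifting argument in the proof of Lemma \ref{lm: coning stress}, one can pass from injectivity on each star to injectivity on the whole complex, and then control $\dim B_3$ by the bounds on the vertex links. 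The low-dimensional base case $d=4$ can be handled directly from the Dehn--Sommerville-type relations for a normal $3$-pseudomanifold, where $g_3 = h_3 - h_2$ is controlled by the stress space of $G(\Delta)$ in $\R^4$; assembling these ingredients recovers \cite[Corollary 1.8]{S3}.
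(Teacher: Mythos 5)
The paper does not prove this lemma itself: it simply cites it as \cite[Corollary 1.8]{S3}, so there is no in-paper proof to compare against. Nonetheless your proposal has a genuine gap. You reduce the problem to proving that multiplication by a second generic linear form $\omega$ is injective as a map $A_2\to A_3$, declare this the ``main obstacle,'' and then sketch an induction using Lemma~\ref{lm: Swartz} and the cone lemma to establish it. That reduction is not correct, and the sketch does not close the gap. The injectivity $\cdot\omega\colon A_2\to A_3$ is essentially a weak Lefschetz statement in degree two, which is open for normal pseudomanifolds and certainly not a consequence of Fogelsanger's generic $d$-rigidity (which only yields the degree-one statement). Your inductive scheme via links and stress-lifting is not spelled out in a way that could plausibly yield it, and as stated it does not constitute a proof.

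The point you are missing is that the degree-two injectivity is not needed. Writing $B=A/\omega A$, one always has
\[
\dim_\field B_3 \;=\; h_3 - \operatorname{rank}\bigl(\cdot\omega\colon A_2\to A_3\bigr) \;\ge\; h_3 - \dim_\field A_2 \;=\; h_3 - h_2 \;=\; g_3,
\]
with no hypothesis on $\omega$ whatsoever. The only injectivity that has to be established is $\cdot\omega\colon A_1\to A_2$, which is exactly the rigidity input you already correctly invoke via Lemma~\ref{lm: basic rigidity}(2); it gives $\dim_\field B_2 = h_2-h_1 = g_2$. Macaulay's theorem applied to the standard graded algebra $B$ then yields $g_3\le \dim_\field B_3 \le (\dim_\field B_2)^{\langle 2\rangle} = g_2^{\langle 2\rangle}$, and the remaining $M$-sequence assertion follows since $g_1\ge g_2^{\langle 1\rangle}$ is automatic from $\dim_\field B_1 \le g_1$ and Macaulay again. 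So the theorem is accessible by your general method, but only once you drop the unnecessary (and unavailable) degree-two Lefschetz claim and use the one-sided inequality $g_3\le\dim_\field B_3$ instead.
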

    		
    	Now we are ready to give an alternative proof of Theorem \ref{thm: NevoNovinsky} for dimension $d-1\geq 4$.
    	\begin{theorem}\label{thm: g_2=1 normal pseudomanifold}
    		Let $\Delta$ be a prime normal $(d-1)$-pseudomanifold with $g_2(\Delta)=1$ and $d\geq 5$. Then $\Delta$ is the stellar subdivision of a stacked $(d-1)$-sphere at a face of dimension $i$, where $0<i<d-1$. Furthermore, $\Delta\in \mathcal{G}_d$.
    	\end{theorem}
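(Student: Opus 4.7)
The plan is to reduce $\Delta$ to a stacked $(d-1)$-sphere via a single Swartz operation at a carefully chosen vertex, and then invoke Proposition \ref{prop: recover prime g_2=1 cases}. I split into two cases by $f_0(\Delta)$. If $f_0(\Delta) = d+2$, I will apply Lemma \ref{lm: f_0=d+2} to write $\Delta = \partial\sigma^a \ast \partial\sigma^{d-a}$ for some $1 \le a \le d-1$; the constraint $g_2(\Delta) = 1$ then forces $2 \le a \le d-2$, so $\Delta \in \mathcal{G}_d$. A direct facet computation shows $\crtr_{\st_{\partial\sigma^d}\tau}(\partial\sigma^d) \cong \partial\sigma^{\dim\tau+1} \ast \partial\sigma^{d-\dim\tau-1}$ for any face $\tau$ of $\partial\sigma^d$, so choosing an $(a-1)$-face $\tau$ realizes $\Delta$ as the central retriangulation of the stacked sphere $\partial\sigma^d$ along $\st_{\partial\sigma^d}\tau$, with $\dim\tau = a-1 \in (0, d-1)$.

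For the remaining case $f_0(\Delta) \ge d+3$ (equivalently $g_1 \ge 2$), I will locate a vertex $v$ with $\lk_\Delta v \cong \partial\sigma^1 \ast \partial\sigma^{d-2}$. Applying Lemma \ref{lm: Swartz} with $k=2$ together with Lemma \ref{lm: coning stress} (giving $g_2(\lk_\Delta v) \le g_2(\Delta) = 1$) and Lemma \ref{lm: g-vector of normal pseudo} (giving $g_3 \le g_2^{<2>} = 1$), the number of vertices $v$ with $g_2(\lk_\Delta v) = 0$ equals $g_1 - 3g_3 + 2 \ge g_1 - 1 \ge 1$; for any such $v$, Theorem \ref{thm: Kalai} applied to the normal $(d-2)$-pseudomanifold $\lk_\Delta v$ (noting $d-2 \ge 3$) shows that $\lk_\Delta v$ is a stacked $(d-2)$-sphere. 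I will then invoke primality twice. First, to exclude $\lk_\Delta v = \partial\sigma^{d-1}$: if this held, then primality would force $V(\lk_\Delta v)$ to be a facet of $\Delta$ (its only alternative being a missing facet), yielding $\partial\sigma^d \subseteq \Delta$, and connectedness of normal pseudomanifolds would then force $\Delta = \partial\sigma^d$, contradicting $g_2(\Delta) = 1$. Second, to show that every missing facet $\tau$ of $\lk_\Delta v$ is absent from $\Delta$: otherwise $\tau \cup \{v\}$ would itself be a missing facet of $\Delta$. Combined with Lemma \ref{lm: g_k Swartz's operation}, which caps the number of missing facets of $\lk_\Delta v$ not in $\Delta$ at $g_2(\Delta) = 1$, this forces $g_1(\lk_\Delta v) = 1$ and hence $\lk_\Delta v \cong \partial\sigma^1 \ast \partial\sigma^{d-2}$ with a unique missing facet $\tau \cong \sigma^{d-2}$ of dimension $d-2$.

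I will then apply the Swartz operation $\so_{v,\tau}$. The two hemispheres $S_0, S_1$ into which $\partial\tau$ decomposes $\lk_\Delta v$ each bound a $(d-1)$-simplex $\mu_i$ containing $\tau$; since $\tau \notin \Delta$, each $\mu_i \notin \Delta$, so the Swartz branch that adds these missing simplices (instead of coning with new vertices) applies. The resulting complex $\Delta' = (\Delta \setminus \st_\Delta v) \cup \{\tau, \mu_0, \mu_1\}$ satisfies $g_2(\Delta') = 0$ and is therefore a stacked $(d-1)$-sphere by Theorem \ref{thm: Kalai}. Verifying that $\mu_0 \cup \mu_1 = \st_{\Delta'}\tau$ and $\Delta = \crtr_{\st_{\Delta'}\tau}(\Delta')$ exhibits $\Delta$ as the central retriangulation of the stacked sphere $\Delta'$ along the star of a $(d-2)$-face, and Proposition \ref{prop: recover prime g_2=1 cases} then concludes $\Delta \in \mathcal{G}_d$. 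The hardest step will be the chain of primality arguments that pin down $\lk_\Delta v$ as precisely $\partial\sigma^1 \ast \partial\sigma^{d-2}$: the uniqueness of its missing facet is exactly what makes the Swartz operation collapse into the clean branch and exposes the central retriangulation structure without introducing any new vertices beyond those already in $V(\Delta)$.
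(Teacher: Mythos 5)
Your overall strategy matches the paper's almost exactly: locate a vertex $v$ whose link has $g_2=0$, use primality to ensure none of the link's missing facets appear in $\Delta$, apply a retriangulation that removes $v$ and drops $g_2$ to zero, and then recover $\Delta$ as a central retriangulation of the resulting stacked sphere, finishing with Proposition~\ref{prop: recover prime g_2=1 cases}. Your case split is by $f_0$ rather than by whether some link has $g_2=0$, but these are essentially interchangeable; and you use $\so_{v,\tau}$ where the paper uses $\sd_v^{-1}$, which the paper itself notes coincide when the link is a stacked sphere of dimension $\geq 3$. Two things you do that the paper leaves implicit are worth keeping: you explicitly rule out $\lk_\Delta v = \partial\sigma^{d-1}$ (the paper applies $\sd_v^{-1}$ and infers $g_1(\lk_\Delta v)=1$ a posteriori from $g_2 \ge 0$, silently assuming a missing facet exists), and you pin the link down to $\partial\sigma^1 * \partial\sigma^{d-2}$ before retriangulating via Lemma~\ref{lm: g_k Swartz's operation}.

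There is one concrete error in your first case. You claim $\crtr_{\st_{\partial\sigma^d}\tau}(\partial\sigma^d) \cong \partial\sigma^{\dim\tau+1} * \partial\sigma^{d-\dim\tau-1}$, but the correct identification is $\crtr_{\st_{\partial\sigma^d}\tau}(\partial\sigma^d) \cong \partial\sigma^{\dim\tau} * \partial\sigma^{d-\dim\tau}$. To see this, note that after the retriangulation the only two missing faces are $\tau$ itself (its boundary survives in $\partial(\st_{\partial\sigma^d}\tau)$, but $\tau$ is an interior face and is deleted) and $\{u\} \cup V(\lk_{\partial\sigma^d}\tau)$, which have $|\tau|$ and $d+2-|\tau|$ vertices respectively; one can also just count facets: $\crtr$ has $(|\tau|)(d+2-|\tau|)$ facets, matching $\partial\sigma^{\dim\tau} * \partial\sigma^{d-\dim\tau}$ and not $\partial\sigma^{\dim\tau+1} * \partial\sigma^{d-\dim\tau-1}$. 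Consequently, to realize $\Delta = \partial\sigma^a * \partial\sigma^{d-a}$ you must take $\tau$ to be an $a$-face of $\partial\sigma^d$, not an $(a-1)$-face. Since $2 \le a \le d-2$, the required range $0 < \dim\tau < d-1$ still holds, so the conclusion of the theorem is unaffected, but as written the choice of $\tau$ is off by one and does not produce $\Delta$.

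Everything else checks out: the count of vertices with $g_2(\lk)=0$ is $g_1 - 3g_3 + 2 \ge g_1 - 1 \ge 1$ for $f_0 \ge d+3$ using Lemmas~\ref{lm: Swartz}, \ref{lm: coning stress}, \ref{lm: g-vector of normal pseudo}; primality forces every missing facet of $\lk_\Delta v$ out of $\Delta$; Lemma~\ref{lm: g_k Swartz's operation} then caps the number of missing facets at one, and ruling out zero pins the link to $\partial\sigma^1 * \partial\sigma^{d-2}$; the Swartz operation collapses into adding the two simplices $\mu_0,\mu_1$ with no new vertices; $g_2(\Delta')=0$ gives a stacked sphere by Theorem~\ref{thm: Kalai}; and $\st_{\Delta'}\tau = \overline{\mu_0}\cup\overline{\mu_1}$ with boundary $\partial\tau * \partial\sigma^1 = \lk_\Delta v$, so coning it back recovers $\Delta$. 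Proposition~\ref{prop: recover prime g_2=1 cases} then gives $\Delta\in\mathcal{G}_d$.
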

    	\begin{proof}
    	By Lemma \ref{lm: g-vector of normal pseudo}, $g_3(\Delta)\leq g_2(\Delta)^{<2>}=1$. Also by Lemma \ref{lm: basic rigidity}, since $\lk_\Delta v$ and $\Delta$ are normal pseudomanifolds of dimension $d-2$ and $d-1$ respectively, they are generically $(d-1)$- and $d$-rigid respectively. Hence by Lemma \ref{lm: coning stress}, $0\leq g_2(\lk_\Delta v)\leq g_2(\Delta)\leq 1$. Using Lemma \ref{lm: Swartz}, we obtain \[\sum_{v\in V(\Delta)} g_2(\lk_\Delta v)=d-1+3g_3(\Delta)\leq d+2.\]
    									
    	If $g_2(\lk_\Delta v)=1$ for every vertex $v$ of $\Delta$, then the above inequality implies that $f_0(\Delta)\leq d+2$. However, $f_0(\Delta)\geq d+2$ and so $\Delta$ has exactly $d+2$ vertices. Hence by Lemma \ref{lm: f_0=d+2}, $\Delta=\partial \sigma^i *\partial \sigma^{d-i}$ for some $2\leq i\leq d-2$, i.e., $\Delta\in \mathcal{G}_d$. It is easy to see that in this case $\Delta$ is the stellar subdivision of $\partial \sigma^d$ at an $i$-face.
    									
        Otherwise, there exists a vertex $v$ such that $g_2(\lk_\Delta v)=0$. By Theorem \ref{thm: Kalai}, $\lk_\Delta v$ is a stacked sphere. We claim that every missing facet $\tau$ of $\lk_\Delta v$ is not a face of $\Delta$; otherwise, $\tau\in\Delta$ and $v*\tau$ is a missing facet of $\Delta$, contradicting the fact that $\Delta$ is prime. Hence we may apply the inverse stellar retriangulation on the vertex $v$ to obtain a new normal pseudomanifold $\sd_v^{-1}(\Delta)$. By Theorem \ref{thm: Kalai} and Lemma \ref{lm: g_k inverse retriangulation}, $0\leq g_2(\sd_v^{-1}(\Delta))=g_2(\Delta)-g_1(\lk_\Delta v)\leq 0$, which implies that $\sd_v^{-1}(\Delta)$ is a stacked sphere. Furthermore, $g_1(\lk_\Delta v)=1$, so $\lk_\Delta v$ is the connected sum of two boundary complexes of simplices. This implies that $\Delta$ is the stellar subdivision of $\sd_v^{-1}(\Delta)$ at a ridge (the unique missing facet of $\lk_\Delta v$). This proves the first claim. Finally, the second claim follows immediately from Proposition \ref{prop: recover prime g_2=1 cases}.
    	\end{proof}
    \section{From $g_2=1$ to $g_2=2$}	
    In this section, we find all homology $(d-1)$-manifolds with $g_2=2$ for $d\geq 4$. Our strategy, as in the previous section, is to apply certain central retriangulations to homology $(d-1)$-spheres with $g_2=1$ and show that in this way we obtain all homology manifolds with $g_2=2$, apart from one exception in dimension 3. We begin with a few lemmas.
      \begin{lemma}\label{lm: prime impliex vertex link is prime}
      	Let $d\geq 5$ and let $\Delta$ be a prime normal $(d-1)$-pseudomanifold with $g_2(\Delta)=2$. Furthermore, assume that $g_2(\lk_\Delta v)\geq 1$ for every vertex $v\in V(\Delta)$. Then every vertex link of $\Delta$ with $g_2=1$ is prime.
      \end{lemma}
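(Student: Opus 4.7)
The plan is to argue by contradiction: suppose some vertex $v\in V(\Delta)$ has $g_2(\lk_\Delta v)=1$ and that $\lk_\Delta v$ admits a missing facet $\tau$. First I would check that $\tau$ is itself a missing $(d-2)$-face of $\Delta$: if it were not, then $\tau\cup\{v\}$ would have every proper subset in $\Delta$ (because $\partial\tau\subseteq\lk_\Delta v$) while $\tau\cup\{v\}\notin\Delta$ (since $\tau\notin\lk_\Delta v$), making $\tau\cup\{v\}$ a missing facet of $\Delta$ and contradicting primeness. The same reasoning applies to every missing facet of $\lk_\Delta v$, so Lemma \ref{lm: g_k Swartz's operation} bounds the number $k$ of such missing facets by $g_2(\Delta)=2$.

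Since $\lk_\Delta v$ is a homology $(d-2)$-sphere with $g_2=1$ and $d-2\geq 3$, I would invoke Theorem \ref{thm: NevoNovinsky} to decompose $\lk_\Delta v=S_0\#(\partial\sigma^{d-1})^{\#k}$, where $S_0\in\mathcal{G}_{d-1}$ is prime with $g_2(S_0)=1$. Applying the full Swartz operation $\so_v$, each of the $k$ copies of $\partial\sigma^{d-1}$ has its $d$-vertex set forming a missing facet of the intermediate complex (its other $(d-1)$-subsets are facets of $\partial\sigma^{d-1}$, already lying in $\lk_\Delta v\subseteq\Delta$, and the remaining one is the corresponding $\tau_i$ which Swartz has just filled in), so those pieces are absorbed by adjoining those $d$-subsets as facets rather than being coned off; only the non-simplex piece $S_0$ is coned off, with a single new vertex $v_*$, yielding $\lk_{\so_v(\Delta)}v_*=S_0$. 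The formula stated after the definition of the Swartz operation gives $g_2(\so_v(\Delta))=2-k$.

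Lemma \ref{lm: coning stress} applied to $\so_v(\Delta)$ at $v_*$ then yields $1=g_2(S_0)=g_2(\lk_{\so_v(\Delta)}v_*)\leq g_2(\so_v(\Delta))=2-k$, forcing $k=1$ and equality. Examining the proof of Lemma \ref{lm: coning stress}, equality means the embedding $\mathrm{LKer}(\mathrm{Rig}(\st_{\so_v(\Delta)}v_*,d))\hookrightarrow\mathrm{LKer}(\mathrm{Rig}(\so_v(\Delta),d))$ is an isomorphism, so every generic $d$-stress of $\so_v(\Delta)$ is supported on edges of $\st_{\so_v(\Delta)}v_*$. Combined with Lemma \ref{lm: NN, vertex stress} applied to $\so_v(\Delta)$, every vertex participates in such a stress and hence must lie in $V(\st_{\so_v(\Delta)}v_*)=V(S_0)\cup\{v_*\}$. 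But a direct count gives $|V(S_0)\cup\{v_*\}|=f_0(\lk_\Delta v)\leq f_0(\Delta)-1<f_0(\Delta)=|V(\so_v(\Delta))|$, so $v_*$ cannot be adjacent to every vertex of $\so_v(\Delta)$, the desired contradiction.

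The main obstacle is the invocation of Lemma \ref{lm: NN, vertex stress} on $\so_v(\Delta)$, which requires $\so_v(\Delta)$ to be prime. Candidate missing facets of $\so_v(\Delta)$ containing $v_*$ are ruled out by primeness of $\lk_{\so_v(\Delta)}v_*=S_0\in\mathcal{G}_{d-1}$, but a candidate of the form $\tau\cup\{w'\}$ with $w'$ distinct from the stacking vertex $w$ is less routine; I expect to rule it out by examining the $1$-dimensional link $\lk_\Delta(\tau-u)$ (a homology circle containing both $v$ and $w$, with $\{v,w\}$ an edge) and leveraging the hypothesis $g_2(\lk_\Delta w')\geq 1$ to preclude the configuration.
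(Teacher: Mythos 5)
Your overall strategy matches the paper's: apply the Swartz operation at the offending vertex, use the rigidity lemmas to control $g_2$, and derive a contradiction from Lemma~\ref{lm: NN, vertex stress} applied to the new complex. Your reduction to $k=1$ is actually cleaner than the paper's: the paper bounds $k\leq 3$ via Lemma~\ref{lm: g_k Swartz's operation}, rules out $k=3$ by a stackedness argument (the new cone vertex would have a non-stacked link), and only then faces $k=2$ with the detailed rigidity argument; your one-line application of Lemma~\ref{lm: coning stress} to $\lk_{\so_v(\Delta)}v_*=S_0$ (giving $1=g_2(S_0)\leq g_2(\so_v(\Delta))=2-k$) collapses both of those cases at once. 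This is a genuine streamlining.

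The gap you flag at the end --- primeness of $\so_v(\Delta)$ --- is real and is the crux. Your proposed route via the circle $\lk_\Delta(\tau-x)$ does not close it: one can verify that in this circle $\{v,w\}$ is an edge and $x$ is adjacent to none of $v$, $w$, $w'$, but a cycle of length $\geq 5$ accommodates that configuration with no contradiction, and the hypothesis $g_2(\lk_\Delta w')\geq 1$ does not enter in any visible way. The paper instead argues primeness indirectly and globally: since $g_2(\so_v(\Delta))=1$, the characterization of normal pseudomanifolds with $g_2=1$ (Theorem~\ref{thm: g_2=1 normal pseudomanifold}/Theorem~\ref{thm: NevoNovinsky}) shows that if $\so_v(\Delta)$ were not prime it would be a nontrivial stacking, hence some vertex $z$ would satisfy $\lk_{\so_v(\Delta)}z=\partial\sigma^{d-1}$. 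One then checks, vertex by vertex, that this cannot happen, using the blanket hypothesis $g_2(\lk_\Delta\cdot)\geq 1$. Concretely: $z=v_*$ has link $S_0$ with $g_2=1$; for $z$ outside $\lk_\Delta v$ the link is unchanged; for $z\in V(S_0)\setminus V(\tau)$ the link is obtained from $\lk_\Delta z$ by renaming the cone vertex, so $g_2$ is unchanged; for $z=w$ the link undergoes an inverse stellar retriangulation over $\partial\tau$, preserving $g_2$; and for $z\in V(\tau)$ one cannot use $g_2$ (it actually drops by one under the operation), but $f_0(\lk_{\so_v(\Delta)}z)=f_0(\lk_\Delta z)\geq d+1$ since $g_2(\lk_\Delta z)\geq 1$, so the link still cannot be $\partial\sigma^{d-1}$. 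Note the paper's one-line claim that $g_2(\lk_{\so_v(\Delta)}z)=g_2(\lk_\Delta z)$ for all $z\neq v_*$ is an overstatement for $z\in V(\tau)$, so the face-count detour for those vertices is genuinely needed; you will want to replace your \emph{ad hoc} candidate-by-candidate primeness argument with this uniform one.
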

      \begin{proof}
      	Assume by contradiction that $g_2(\lk_\Delta u)=1$ and $\lk_\Delta u$ is not prime for some vertex $u\in V(\Delta)$. Then by Theorem \ref{thm: NevoNovinsky}, $\lk_\Delta u$ can be written as $\Delta_1\#\Delta_2\#\cdots \#\Delta _k$, where $k\geq 2$, $\Delta_1\in \mathcal{G}_{d-1}$ and the other $\Delta_i$'s are boundary complexes of simplices. First we claim that every missing facet $\tau$ of $\lk_\Delta u$ is not a face of $\Delta$. Otherwise, $\tau* u$ is a missing facet of $\Delta$, contradicting that $\Delta$ is prime. Applying the Swartz operation on vertex $u$ (with a new vertex $u'$), we obtain a new normal $(d-1)$-pseudomanifold $\Delta':=\so_u(\Delta)$ and $g_2(\Delta')=g_2(\Delta)-(k-1)=3-k$. Since $g_2(\Delta')\geq 0$, it follows that $k\leq 3$. 
      	
      	Since $\st_{\Delta'}u'$ is generically $d$-rigid and $g_2(\st_{\Delta'}u')=g_2(\lk_{\Delta'}u')=1$, there is a nontrivial stress of $\Delta'$ supported on $\st_{\Delta'}u'$, and so $k\neq 3$. Next if $k= 2$, then the link of the vertex $w=V(\Delta_2\backslash \Delta_1)$ has $g_2(\lk_{\Delta'} w)=g_2(\lk_{\Delta}w)\geq 1$. Hence there exists a generic stress of $\Delta'$ supported on $\st_{\Delta'}w$, and $w$ participates in this stress. Since $w\notin\st_{\Delta'}u'$, we must have $g_2(\Delta')\geq 2$, contradicting the fact that $g_2(\Delta')=1$. We conclude that $k=1$ and $\lk_\Delta u$ is prime.
      \end{proof}
      
    \begin{lemma}\label{lm: g_2=2 and links}
     	Let $d\geq 5$ and let $\Delta$ be a prime normal $(d-1)$-pseudomanifold with $g_2(\Delta)=2$. Furthermore, assume that $g_2(\lk_\Delta v)\geq 1$ for every vertex $v\in V(\Delta)$. Then the following holds:
     	\begin{enumerate}
     		\item If $g_2(\lk_\Delta u)=2$ for some vertex $u$, then $V(\st_\Delta u)=V(\Delta)$.
     		\item If $g_2(\lk_\Delta u)=1$ and $G(\Delta[V(\lk_\Delta u)])=G(\lk_\Delta u)\cup \{e\}$ for some vertex $u$ and edge $e$, then $\Delta=\partial \sigma^1*\partial \sigma^2*\partial \sigma^{d-3}$.
     		\item If every vertex $u$ with $g_2(\lk_\Delta u)=1$ also satisfies $G(\Delta[V(\lk_\Delta u)])=G(\lk_\Delta u)$, then at least one of such vertex links is the join of two boundary complexes of simplices. 
     	\end{enumerate}
     \end{lemma}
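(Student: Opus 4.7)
For Part 1, I would invoke the rigidity-theoretic framework: by Lemma \ref{lm: basic rigidity}(2) the graph $G(\Delta)$ is generically $d$-rigid, and by Lemma \ref{lm: NN, vertex stress} every vertex of $\Delta$ participates in some generic $d$-stress. The proof of Lemma \ref{lm: coning stress} embeds $\mathrm{Lker}(\mathrm{Rig}(\st_\Delta u,d))$ into $\mathrm{Lker}(\mathrm{Rig}(\Delta,d))$ by padding with zeros on edges outside $\st_\Delta u$. Because $g_2(\lk_\Delta u)=2=g_2(\Delta)$, these left kernels have the same dimension, so the embedding is a bijection and every $d$-stress of $\Delta$ is supported on edges of $\st_\Delta u$. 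A vertex $w\in V(\Delta)\setminus V(\st_\Delta u)$ would then be incident only to edges outside $\st_\Delta u$, so $w$ could not participate in any stress, contradicting Lemma \ref{lm: NN, vertex stress}.

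For Part 2, Lemma \ref{lm: prime impliex vertex link is prime} makes $\lk_\Delta u$ prime, and Theorem \ref{thm: g_2=1 normal pseudomanifold} (or Theorem \ref{thm: NevoNovinsky} at the boundary dimension $d=5$) places $\lk_\Delta u\in\mathcal{G}_{d-1}$. The joins $\partial\sigma^i*\partial\sigma^{d-1-i}$ for $2\le i\le d-3$ have complete $1$-skeletons and admit no missing edge, so the presence of the single extra edge $e$ forces $\lk_\Delta u=\partial\sigma^{d-3}*C$ with $|C|\ge 4$ and $e$ a chord of $C$. Writing $e=\{v,v_j\}$ and counting the neighbors of $v$ in $\Delta$ (namely $u$, the $d-2$ vertices of the $\partial\sigma^{d-3}$-factor, the two cycle-neighbors of $v$ in $C$, and $v_j$) gives $f_0(\lk_\Delta v)=d+2$. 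Our hypothesis together with Lemma \ref{lm: coning stress} yields $g_2(\lk_\Delta v)\in\{1,2\}$: in the $g_2=2$ subcase Part 1 applied at $v$ forces $V(\st_\Delta v)=V(\Delta)$, while in the $g_2=1$ subcase Lemma \ref{lm: prime impliex vertex link is prime} and Theorem \ref{thm: g_2=1 normal pseudomanifold} place $\lk_\Delta v$ in $\mathcal{G}_{d-1}$, where the vertex count $d+2$ forces $\lk_\Delta v=\partial\sigma^{d-3}*C_4$. Matching the $\partial\sigma^{d-3}$-factors of $\lk_\Delta u$ and $\lk_\Delta v$, the four cycle-vertices of $C_4$ must be $\{u,v_j\}$ together with the two cycle-neighbors of $v$ in $C$; tracing the cyclic order of $C_4$ then forces $|C|=4$. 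From this structural data I would read off a three-fold decomposition $V(\Delta)=A\sqcup B\sqcup D$ with $A$ the missing diagonal pair of $C$, $B=\{u\}\cup e$, and $D$ the $\partial\sigma^{d-3}$-factor, and verify face-by-face that $\partial\sigma^1*\partial\sigma^2*\partial\sigma^{d-3}\subseteq\Delta$. Equality then follows because the right-hand side is itself a normal $(d-1)$-pseudomanifold and $\Delta$ admits no proper subcomplex that is one.

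For Part 3, I argue by contradiction, supposing no vertex link of $\Delta$ is a join of two boundary complexes of simplices. For each $v$ with $g_2(\lk_\Delta v)=1$, Lemma \ref{lm: prime impliex vertex link is prime} and Theorem \ref{thm: g_2=1 normal pseudomanifold} place $\lk_\Delta v\in\mathcal{G}_{d-1}$, and under our assumption $\lk_\Delta v$ must be of the form $\partial\sigma^{d-3}*C_v$ with $|C_v|\ge 4$; the Part 3 hypothesis forbids any chord of $C_v$ from $\Delta$, giving $g_1(\lk_\Delta v)=|C_v|-1\ge 3$. For any $w$ with $g_2(\lk_\Delta w)=2$, Part 1 gives $V(\st_\Delta w)=V(\Delta)$, hence $g_1(\lk_\Delta w)=f_0(\Delta)-d-1$. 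Feeding these two estimates into Lemma \ref{lm: Swartz} with $k=1$ and $k=2$ and bounding $g_3\le g_2^{<2>}=2$ via Lemma \ref{lm: g-vector of normal pseudo}, I would derive an inconsistent system of inequalities on $f_0(\Delta)$ and on the counts of vertices of each link type, producing the required contradiction.

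The hardest step, and the main obstacle, is the structural reconstruction in Part 2. The hypothesis only pins down the $1$-skeleton of $\Delta[V(\lk_\Delta u)]$, so lifting from local combinatorial data (the cycle $C$, the chord $e$, and the $4$-cycle identified in $\lk_\Delta v$) to a full three-fold join decomposition of $\Delta$ requires showing that each would-be missing face of $\partial\sigma^1*\partial\sigma^2*\partial\sigma^{d-3}$ is indeed absent from $\Delta$. This face-by-face check must simultaneously use the homology-manifold hypothesis, the primeness of $\Delta$, and the $4$-cycle structure in $\lk_\Delta v$.
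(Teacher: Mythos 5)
Part 1 is essentially the paper's argument, phrased in terms of kernel dimensions instead of stress counting; both boil down to the same rigidity fact plus Lemma~\ref{lm: NN, vertex stress}.

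Part 2 has a genuine gap: you never establish $V(\st_\Delta u)=V(\Delta)$, and this is the load-bearing step. Your count ``$f_0(\lk_\Delta v)=d+2$'' only enumerates neighbors of $v$ inside $V(\st_\Delta u)$; without knowing that the star of $u$ already exhausts $V(\Delta)$, the link $\lk_\Delta v$ could contain further vertices and you cannot conclude $\lk_\Delta v=\partial\sigma^{d-3}*C_4$ (it could be $\partial\sigma^{d-3}*C_k$ for $k>4$). The paper gets $V(\st_\Delta u)=V(\Delta)$ first, by observing that $\Delta[V(\st_\Delta u)]$ contains the extra edge $e$, so $g_2(\Delta[V(\st_\Delta u)])\ge g_2(\st_\Delta u)+1=2$, and then running the Part~1 stress argument on that induced subgraph. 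Everything downstream — the matching of $\partial\sigma^{d-3}$-factors, the 4-cycle identification, and the final join decomposition — depends on this. (A separate, cosmetic point: the paper's proof inspects $v\in V(C)\setminus V(e)$ rather than $v\in V(e)$; your choice isn't wrong in principle, but a vertex in $V(e)$ could a priori have $g_2(\lk_\Delta v)=2$, a subcase you mention but don't dispose of.) Your closing ``equality because no proper subcomplex is a normal pseudomanifold'' is the right move, but only after the join inclusion has actually been verified.

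Part 3 diverges seriously from the paper and, as sketched, does not work. The paper's argument is structural, not numerical: under the bad hypothesis it shows $g_2(\lk_\Delta v)=1$ for all $v\in V(C)\cup(V(\Delta)\setminus V(\st_\Delta u))$, shows $g_2(\lk_\Delta v)=2$ for $v\in F$ by exhibiting $u*\partial(F\setminus\{v\})*C\subseteq\lk_\Delta v$ with $(F\setminus\{v\})*C\not\subseteq\lk_\Delta v$, deduces that $F$ is a missing face of $\Delta$ by analyzing $\lk_\Delta w$ for $w\in\lk_\Delta F$, and finally applies the inverse stellar retriangulation $\sd_u^{-1}$ to force $g_2=0$ while an induced subcomplex still has $g_2\ge1$. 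Your proposal to feed bounds $g_1(\lk_\Delta v)\ge 3$ (for $g_2$-one links) and $g_1(\lk_\Delta w)=f_0(\Delta)-d$ (for $g_2$-two links; note your $f_0(\Delta)-d-1$ is off by one) into Lemma~\ref{lm: Swartz} with $k=1,2$ and $g_3\le 2$ does not manifestly close: for instance with $d=11$, $f_0=16$, and no $g_2$-two links, all the inequalities you list are satisfied (the actual obstruction in that case is that the $k=2$ Swartz \emph{equality} forces $g_3=-4/3$, which is non-integral — but integrality is not part of the system you invoke, and I see no argument that your inequality system is inconsistent in general). You would need to carry out this computation in full, and I suspect a purely $f$-vector argument is insufficient; the paper's use of the missing face $F$ and the retriangulation machinery is doing real work.
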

     \begin{proof}
     	For part 1, note that $g_2(\lk_\Delta u)=g_2(\st_\Delta u)=2$. If $V(\st_\Delta u)\neq V(\Delta)$, then by Lemma \ref{lm: NN, vertex stress}, there is a vertex not in $V(\st_\Delta u)$ that participates in a generic $d$-stress of $G(\Delta)$. Hence $g_2(\Delta)\geq g_2(\st_\Delta u)+1=3$, contradicting $g_2(\Delta)=2$.
     	
     	For part 2, note that $g_2(\Delta[V(\st_\Delta u)])= g_2(\st_\Delta u)+1=2$, and so using the same argument as in part 1 we obtain that $V(\st_\Delta u)=V(\Delta)$. Since $G(\lk_\Delta u)$ is not a complete graph (it misses $e$) and $\lk_\Delta u$ is prime by Lemma \ref{lm: prime impliex vertex link is prime}, it follows from Theorem \ref{thm: NevoNovinsky} that $\lk_\Delta u$ is the join of a cycle $C$ and $\partial \sigma^{d-3}$. Hence $V(e)\subseteq V(C)$. For every vertex $v\in V(C)-V(e)$, its degree in $\Delta[V(C)]=C\cup\{e\}$ is exactly 2, and thus $V(\st_\Delta v)\subsetneq V(\Delta)$. By part 1, $g_2(\lk_\Delta v)=1$. Then as $V(\st_\Delta v)$ is strictly contained in $V(\Delta)$, $f_0(\lk_\Delta v)\leq 3+f_0(\partial \sigma^{d-3})=d+1$ yields that $\lk_\Delta v$ is the join of a 3-cycle and $\partial \sigma^{d-3}$, which further implies that $e\in \lk_\Delta v$. Hence $\Delta[V(C)]$ is a triangulated 2-ball whose boundary is the 4-cycle $C$. Since $V(\st_\Delta u)=V(\Delta)$, $\Delta[V(\lk_\Delta u)]$ is a homology ball. Hence it follows that $\Delta[V(\lk_\Delta u)]=\Delta[V(C)]*\partial \sigma^{d-3}$, and so $\Delta$ is the suspension of $\partial (e*u)*\partial \sigma^{d-3}$.
     	
     	For part 3, by Lemma \ref{lm: prime impliex vertex link is prime} and Theorem \ref{thm: NevoNovinsky}, either $\lk_\Delta u=C *\partial F$ for some cycle $C$ of length $\geq 4$ and missing $(d-3)$-face $F$ of $\lk_\Delta u$, or $\lk_\Delta u=\partial \sigma^i*\partial \sigma^{d-1-i}$ for some $2\leq i\leq d-3$. If every vertex link with $g_2=1$ is of the former type, then since $G(\Delta[V(C)])=G(C)$, it follows that $V(\st_\Delta a)\subsetneq V(\Delta)$ for every vertex $a\in V(C)$. Hence by part 1, $g_2(\lk_\Delta a)=1$, and it is the join of $\partial F$ and a cycle. Also every vertex from $\Delta-\st_\Delta u$ is not connected to $u$, so again by part 1 the links of these vertices have $g_2=1$. On the other hand, the link of every vertex $b\in F$ contains a subcomplex $u*\partial(F-\{b\})*C$ yet $(F-\{b\})*C\nsubseteq \lk_\Delta b$. Hence $g_2(\lk_\Delta b)\neq 1$ by Theorem \ref{thm: NevoNovinsky}. By Lemma \ref{lm: coning stress}, $g_2(\lk_\Delta b)=2$, and by part 1, $V(\st_\Delta b)=V(\Delta)$ for every $b\in F$. 
     	
     	We claim that $F$ is a missing face of $\Delta$. Otherwise, let w be a vertex in $\lk_\Delta F$. Since $g_2(\lk_\Delta w)=1$, the previous argument shows that $\lk_\Delta w$ must be the join of $\partial F$ and a cycle. However, $F\in \lk_\Delta w$, a contradiction. Now $\lk_\Delta u=\Delta[V(\lk_\Delta u)]$, so we apply the inverse stellar retriangulation on vertex $u$ to obtain a new complex $\sd_{u}^{-1}(\Delta)$ with $0\leq g_2(\sd_{u}^{-1}(\Delta))=g_2(\Delta)-g_1(\lk_\Delta u)\leq 2-2=0$. Hence $\sd_{u}^{-1}(\Delta)$ is stacked. On the other hand, there exists a vertex $z$ in $V(\sd_{u}^{-1}(\Delta))-V(\lk_\Delta u)$ and its link in $\sd_{u}^{-1}(\Delta)$ is also stacked. But then $g_2(\lk_\Delta z)=g_2(\lk_{\sd_{u}^{-1}(\Delta)} z)=0$, which contradicts our assumption $g_2(\lk_\Delta z)\geq 1$. The result follows.
     \end{proof}
   
    \begin{theorem}\label{thm:g_2=2 and d>4}
    	Let $d\geq 5$. Every prime normal $(d-1)$-pseudomanifold with $g_2=2$ can be obtained from a polytopal $(d-1)$-sphere with $g_2=0$ or 1, by centrally retriangulating along some stacked subcomplex.
    \end{theorem}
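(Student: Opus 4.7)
The plan is to case-split on the distribution of $g_2$-values among vertex links of $\Delta$ and, in each case, to identify $\Delta$ explicitly as $\crtr_B(\Delta')$ for a polytopal $(d-1)$-sphere $\Delta'$ with $g_2(\Delta')\leq 1$ and a stacked subcomplex $B$.

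First suppose some $v\in V(\Delta)$ satisfies $g_2(\lk_\Delta v)=0$. Theorem~\ref{thm: Kalai} then makes $\lk_\Delta v$ a stacked $(d-2)$-sphere. Primeness of $\Delta$ excludes $\lk_\Delta v=\partial\sigma^{d-1}$ (otherwise $V(\lk_\Delta v)$ would itself be a missing facet of $\Delta$), giving $g_1(\lk_\Delta v)\geq 1$, and forbids any missing facet of $\lk_\Delta v$ from being a face of $\Delta$ (else coning such a face with $v$ produces a missing facet of $\Delta$). Hence $\sd_v^{-1}(\Delta)$ is well-defined and Lemma~\ref{lm: g_k inverse retriangulation} gives $g_2(\sd_v^{-1}(\Delta))=2-g_1(\lk_\Delta v)\in\{0,1\}$. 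Being PL-homeomorphic to $\Delta$, the complex $\sd_v^{-1}(\Delta)$ is a polytopal $(d-1)$-sphere by Theorem~\ref{thm: Kalai} (if its $g_2$ is $0$) or Theorem~\ref{thm: NevoNovinsky} applied to its prime summands (if its $g_2$ is $1$). Setting $B=(\lk_\Delta v)(1)$, a stacked $(d-1)$-ball, recovers $\Delta=\crtr_B(\sd_v^{-1}(\Delta))$.

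Now suppose $g_2(\lk_\Delta v)\geq 1$ for every $v$. Here Lemmas~\ref{lm: prime impliex vertex link is prime} and~\ref{lm: g_2=2 and links} do most of the work. If every link has $g_2=1$, each lies in $\mathcal G_{d-1}$; one splits on whether the induced graph $G(\Delta[V(\lk_\Delta u)])$ strictly contains $G(\lk_\Delta u)$ for some $u$. In the \textit{strictly larger} scenario, Lemma~\ref{lm: g_2=2 and links}(2) pins down $\Delta=\partial\sigma^1*\partial\sigma^2*\partial\sigma^{d-3}$, which — reading Lemma~\ref{lm: centr retri on join of simplexes} backwards — equals $\crtr_{\st_{\Delta''}\tau}(\Delta'')$ with $\Delta''=\partial\sigma^2*\partial\sigma^{d-2}\in\mathcal G_d$ and $\tau$ a facet of $\partial\sigma^{d-2}$, and $\st_{\Delta''}\tau$ is a stacked subcomplex of $\Delta''$. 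In the \textit{equal} scenario, Lemma~\ref{lm: g_2=2 and links}(3) supplies a vertex $u$ with $\lk_\Delta u=\partial\sigma^i*\partial\sigma^{d-1-i}$; when the stacking index $\min(i,d-1-i)+1$ lies in the range of Lemma~\ref{lm: g_k inverse retriangulation}, the inverse stellar retriangulation $\sd_u^{-1}$ reduces $g_2$ to $1$ and lands on a sphere in $\mathcal G_d$, and $B=(\lk_\Delta u)(\min(i,d-1-i))$ is the desired stacked ball.

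The main obstacle is the remaining subcase, where some vertex $u$ has $g_2(\lk_\Delta u)=2$. Lemma~\ref{lm: g_2=2 and links}(1) forces $V(\st_\Delta u)=V(\Delta)$, so $u$ is adjacent to every other vertex, yet $\lk_\Delta u$ need not lie in $\mathcal G_{d-1}$ and no vertex link of $\Delta$ is obviously stacked. The plan is to invoke the theorem recursively on $\lk_\Delta u$ — itself a homology $(d-2)$-sphere with $g_2=2$ — using Theorem~\ref{thm: g_2=2 and d=4} as the base of the induction, to write $\lk_\Delta u=\crtr_{B_0}(\Lambda)$ for a polytopal $(d-2)$-sphere $\Lambda$ with $g_2(\Lambda)\leq 1$ and a stacked subcomplex $B_0\subseteq\Lambda$. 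Since $u$ is universal, the complex $\Delta'$ obtained from $\Delta$ by replacing $\st_\Delta u$ with $u*\Lambda$ is then a polytopal $(d-1)$-sphere with $g_2(\Delta')=g_2(\Lambda)\leq 1$, and $B=u*B_0$ is a stacked subcomplex of $\Delta'$ satisfying $\crtr_B(\Delta')=\Delta$. Verifying polytopality of the lifted sphere $\Delta'$ and the correct $(r-1)$-stacked type of $B$ — in particular checking compatibility with the $g_2$-additivity of Lemma~\ref{lm: g_k central retriangulation} — is the central technical hurdle.
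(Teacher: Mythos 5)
Your Case 1 (some vertex $v$ has $g_2(\lk_\Delta v)=0$) is essentially identical to the paper's argument and is correct. However, your Case 2 (all vertex links have $g_2\geq 1$) departs from the paper in a way that introduces two genuine gaps.

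First, the subcase split on whether some link has $g_2=2$ is unnecessary: the paper observes that not all vertex links can have $g_2=2$ — otherwise $2f_0(\Delta)=\sum_v g_2(\lk_\Delta v)\leq 2d+4$ forces $g_1\leq 1$ and hence $g_2\leq 1$ by Lemma~\ref{lm: g-vector of normal pseudo}, a contradiction — so there is always a vertex $u$ with $g_2(\lk_\Delta u)=1$, and Lemmas~\ref{lm: prime impliex vertex link is prime} and~\ref{lm: g_2=2 and links} are then applied to \emph{that} vertex. The presence of vertices with $g_2(\lk)=2$ is not an obstacle; the paper's ``latter case'' argument even shows that every vertex of $\lk_\Delta u$ has $g_2(\lk)=2$ while the argument still goes through by face-counting. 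Your separate recursive treatment of the ``some link has $g_2=2$'' case is thus superfluous, and more importantly the recursive step itself is not well-defined. If $\lk_\Delta u=\crtr_{B_0}(\Lambda)$ with new cone vertex $w$, then $w$ is a genuine vertex of $\Delta$ whose star in $\Delta$ contains faces lying in $\mathrm{ast}_\Delta(u)$ as well, and $\mathrm{ast}_\Delta(u)$ is glued to $\st_\Delta u$ along $\lk_\Delta u$, not along $\Lambda$ (which has interior faces of $B_0$ that $\lk_\Delta u$ lacks). So ``replacing $\st_\Delta u$ by $u*\Lambda$'' does not yield a simplicial complex, let alone a polytopal sphere, without substantial additional argument that you have not supplied.

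Second, in your ``equal scenario'' you condition the $\sd_u^{-1}$ argument on ``when the stacking index $\min(i,d-1-i)+1$ lies in the range of Lemma~\ref{lm: g_k inverse retriangulation},'' but this condition can fail: the lemma requires $r\leq d/2$ (with the ambient complex of dimension $d-1$), while $\partial\sigma^i*\partial\sigma^{d-1-i}$ is only $(r-1)$-stacked for $r\geq\min(i,d-1-i)+1$, and for $d$ odd with $i=(d-1)/2$ one gets $r=(d+1)/2>d/2$. You do not say what to do when the condition fails, so the case is not actually closed. The paper avoids this entirely: from the equality $\sum_v g_2(\lk_\Delta v)=f_0+(d+1)$ it derives $f_0=d+3$, concludes $\Delta$ is the \emph{suspension} of $\lk_\Delta u$, and then invokes Lemma~\ref{lm: centr retri on join of simplexes} to exhibit $\Delta$ directly as $\crtr_{\st_{\Delta'}\tau}(\Delta')$ for $\Delta'=\partial\sigma^{i'}*\partial\sigma^{d-i'}$ — no inverse stellar retriangulation and hence no range constraint.
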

    \begin{proof}
    	Let $\Delta$ be the normal $(d-1)$-pseudomanifold with $g_2(\Delta)=2$. By Lemma \ref{lm: g-vector of normal pseudo}, $g_3(\Delta)\leq g_2^{<2>}(\Delta)=2$. Also by Lemma \ref{lm: Swartz}, \[\sum_{v\in V(\Delta)}g_2(\lk_\Delta v)=2(d-1)+3g_3(\Delta)\leq 2d+4.\]
    	In the following we consider two different cases.
    	
    	{\bf Case 1:} $g_2(\lk_\Delta v)\geq 1$ for every vertex $v\in V(\Delta)$. First notice that there exists a vertex $u\in V(\Delta)$ with $g_2(\lk_\Delta u)=1$. Otherwise, $2f_0(\Delta)=\sum_{v\in V(\Delta)}g_2(\lk_\Delta
     v)\leq 2d+4$, and by Lemma \ref{lm: f_0=d+2} $g_2(\Delta)\leq 1$, a contradiction. Then Lemma \ref{lm: prime impliex vertex link is prime} and Lemma \ref{lm: g_2=2 and links} imply that either $\Delta=\partial \sigma^1* \partial \sigma^2 *\partial \sigma^{d-3}$, or there exist a vertex $u$ such that $\lk_\Delta u=\partial \sigma^i *\partial \sigma^{d-i-1}$ for some $i$. In the former case, $\Delta$ is exactly the complex $\crtr_{\st_\Delta \tau}(\partial \sigma^2*\partial \sigma^{d-2})$, where $\tau$ is a facet of $\partial \sigma^{d-2}$.
     Now we deal with the latter case by first determining the $g_2$-numbers of all vertex links. If $w\in V(\lk_\Delta u)$, then $\lk_\Delta w$ contains either the subcomplex $u*\partial \sigma^i *\partial \sigma^{d-i-2}$ or $u*\partial \sigma^{i-1} *\partial \sigma^{d-i-1}$. Hence $\lk_\Delta w\notin \mathcal{G}_{d-1}$ and we conclude that $g_2(\lk_\Delta w)=2$. On the other hand, every vertex $w'\in V(\Delta-\st_\Delta u)$ is not connected to $u$, so by part 1 of Lemma \ref{lm: g_2=2 and links}, $g_2(\lk_\Delta w')=1$. Hence
    	\[ f_0(\Delta)+(d+1)=\sum_{v\in V(\Delta)} g_2(\lk_\Delta v)=2(d-1)+3g_3(\Delta)\leq 2d+4,\]
    	which implies that $f_0\leq d+3$. Hence $|V(\Delta)-V(\st_\Delta u)|=1$, and $\Delta$ is the suspension of $\lk_\Delta u$. Note that for any $2\leq i\leq d-3$, the complex $\partial \sigma^1*\partial \sigma^i*\partial\sigma^{d-i-1}$ is obtained from $\partial \sigma^i*\partial\sigma^{d-i}$ by central retriangulation along the star of a facet of $\partial\sigma^{d-i}$.
    	
    	{\bf Case 2:} $g_2(\lk_\Delta u)=0$ for some vertex $u$. As $\Delta$ is prime, every missing facet $\tau$ of $\lk_\Delta u$ is also a missing face of $\Delta$. We apply the inverse stellar retriangulation on vertex $u$ to obtain a new normal $(d-1)$-pseudomanifold $\sd_u^{-1}(\Delta)$. Since by Lemma \ref{lm: g_k inverse retriangulation}, $g_2(\sd_u^{-1}(\Delta))=g_2(\Delta)-g_1(\lk_\Delta u)\geq 0$, either $\sd_u^{-1}(\Delta)$ is a stacked $(d-1)$-sphere and we let $B:=(\lk_\Delta u)(1)$ is the union of three adjacent facets of $\sd_u^{-1}(\Delta)$, or $\sd_u^{-1}(\Delta)$ is a polytopal sphere with $g_2=1$ and we let $B$ be the union of two adjacent facets of $\sd_u^{-1}(\Delta)$. In both cases, $\Delta$ is obtained by centrally retriangulating the polytopal sphere $\sd_u^{-1}(\Delta)$ along $B$.
    \end{proof}
    
    It is left to treat the case of dimension 3. 
    \begin{theorem}\label{thm: g_2=2 and d=4}
    	Let $\Delta$ be a prime homology 3-manifold with $g_2(\Delta)=2$. Then $\Delta$ is either the octahedral 3-sphere, or the stellar subdivision of a 3-sphere with $g_2=1$ at a ridge.
    \end{theorem}
    \begin{proof}
    	Let $u$ be a vertex of minimal degree in $V(\Delta)$. Since $g_2(\Delta)=\frac{1}{2}\sum_{v\in V(\Delta)}f_0(\lk_\Delta v)-4f_0(\Delta)+10=2$, it follows that $4<\deg u\leq 7$. We have the following cases.
    	
    	{\bf Case 1:} $\deg u=5$. Then $\lk_\Delta u$ is the connected sum of two boundary complexes of 3-simplices. As before, $\sd_u^{-1}(\Delta)$ is well-defined, and $g_2(\sd_u^{-1}(\Delta))=1$. In this case $\Delta$ is the stellar subdivision of a 3-sphere with $g_2=1$ at a ridge.
    	
    	{\bf Case 2:} $\deg u=6$ or 7 and $\lk_\Delta u$ is not prime. Then $\lk_\Delta u$ is either the connected sum of three or four boundary complexes of simplices, or it is obtained by stacking over an octahedral 2-sphere. In the former case, $g_2(\sd_u^{-1}(\Delta))=g_2(\Delta)-g_1(\lk_\Delta u)=0$ or -1. So by the lower bound theorem, $\sd_u^{-1}(\Delta)$ must be stacked. Hence there exists a vertex $w\in \sd_u^{-1}(\Delta)$ of degree 4 ($w\neq u$). Then $\deg_\Delta w\leq \deg_{\sd_u^{-1}(\Delta)} w+1\leq 5$, a contradiction. If $\lk_\Delta u$ is obtained by stacking over the octahedral 2-sphere, then by applying the Swartz operation on the vertex $u$, we obtain a new complex $\Delta'$ with $g_2(\Delta')=1$ ($u'$ is the new vertex). Note that $\deg_{\Delta'} v\geq \deg_{\Delta} v-1\geq 5$ for every vertex $v\in \Delta'$, so $\Delta'$ is prime. Since $\lk_{\Delta'} u'$ is the octahedral 2-sphere, by Theorem \ref{thm: NevoNovinsky} it follows that $\Delta'$ must be the join of a 3-cycle and 4-cycle. However, $f_0(\Delta')=f_0(\Delta)\geq \deg u+1=8$, a contradiction. Hence case 2 is impossible.
    	
    	{\bf Case 3:} $\lk_\Delta u$ is the octahedral 2-sphere. Since $\st_\Delta u$ is genericially 4-rigid, $g_2(\st_\Delta u)=0$, and $g_2(\Delta[V(\st_\Delta u)])\leq g_2(\Delta)=2$, it follows that at least one pair of antipodal vertices in $\lk_\Delta u$ forms a missing edge. Let this missing edge be $\{a,b\}$. We remove $u$ and replace $\st_\Delta u$ with the 3-ball $(\lk_\Delta u \cup \{a,b\})(1)$ to obtain a new complex $\Delta'$. We have $g_2(\Delta')=1$. Moreover, $\deg_{\Delta'}v=\deg_\Delta v-1\geq 5$ if $v\in V(\lk_\Delta u)\backslash\{a,b\}$, and else $\deg_{\Delta'}v=\deg_\Delta v\geq 6$. Hence $\Delta'$ is prime. By Theorem \ref{thm: NevoNovinsky}, $\Delta'$ is the join of a 3-cycle and a $(f_0(\Delta')-3)$-cycle. Since every vertex in the $(f_0(\Delta')-3)$-cycle has degree 5, it follows that this $(f_0(\Delta')-3)$-cycle is the 4-cycle $\lk_\Delta \{a,u\}$. Hence $f_0(\Delta)=8$ and $\Delta$ is the octahedral 3-sphere.
    	
    	{\bf Case 4:} $\lk_\Delta u=(a*C)\cup (b*C)$ for a 5-cycle $C$ and two vertices $a$ and $b$. Then a similar argument as in case 3 implies that $\Delta[V(C)]$ has at least one missing edge $e$. As $C\cup e$ is the union of a 4-cycle and a 3-cycle, $(\lk_\Delta u\cup e)(1)$ is the union of a octahedral sphere $S$ and a 3-ball $B$ (which is the suspension of a triangle). We construct a new complex $\Delta'$ by removing $u$, adding a new vertex $u'$ and the edge $e$, and replacing $\st_\Delta u$ with $(S*u')\cup B$. Then $\Delta'$ is a homology 3-manifold with $g_2(\Delta')=2$. Furthermore, the degree of every vertex of $\Delta'$ is at least 6, and so $\Delta'$ is prime. By case 2 and 3, $\Delta'$ is the octahedral 3-sphere. However, this implies the vertex $a$ has $\deg_{\Delta} a=\deg_{\Delta'} a =6<7$, contradicting that $u$ is of minimal degree.
    \end{proof}
    \begin{corollary}
    	Let $d\geq 4$ and let $\Delta$ be a homology $(d-1)$-manifold with $g_2=2$. Then $\Delta$ is either the connected sum of two polytopal $(d-1)$-spheres with $g_2=1$ (not necessarily prime) given by Theorem \ref{thm: NevoNovinsky}, or it is obtained by stacking over the complexes indicated in Theorem \ref{thm:g_2=2 and d>4} and \ref{thm: g_2=2 and d=4}.
    \end{corollary}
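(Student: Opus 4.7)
The plan is to induct on $f_0(\Delta)$, taking the prime case as the base. If $\Delta$ has no missing facets, then Theorem~\ref{thm:g_2=2 and d>4} (for $d\geq 5$) or Theorem~\ref{thm: g_2=2 and d=4} (for $d=4$) directly identifies $\Delta$ as one of the complexes listed in option~(2) of the corollary, corresponding to zero stackings performed.

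Otherwise $\Delta$ has a missing facet $\tau$, and I write $\Delta=\Delta_1\#_\tau\Delta_2$ by splitting $\Delta$ along $\partial\tau$ and adjoining $\tau$ to each side. For each $v\in\tau$, the sphere $\lk_\Delta v$ decomposes as two homology balls glued along $\partial(\tau-v)$, and attaching $\tau-v$ to each half produces a homology $(d-2)$-sphere; hence both $\Delta_i$ are homology $(d-1)$-manifolds with $f_0(\Delta_i)<f_0(\Delta)$. The standard connected-sum face-number identities give $g_2(\Delta)=g_2(\Delta_1)+g_2(\Delta_2)=2$, and Theorem~\ref{thm: Kalai} forces each $g_2(\Delta_i)\geq 0$, leaving the cases $(g_2(\Delta_1),g_2(\Delta_2))=(1,1)$ or, up to relabeling, $(2,0)$.

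In the $(1,1)$ case the inductive hypothesis -- strengthened to classify every homology $(d-1)$-manifold with $g_2\leq 2$ as a polytopal sphere -- says each $\Delta_i$ is a polytopal $(d-1)$-sphere with $g_2=1$, so $\Delta$ lies in option~(1). In the $(2,0)$ case Theorem~\ref{thm: Kalai} forces the $g_2=0$ summand to be a stacked sphere, so $\Delta$ is obtained from $\Delta_1$ by iterated stacking. Applying induction to $\Delta_1$: if $\Delta_1$ lies in option~(1) as the connected sum of two polytopal $g_2=1$ spheres, the extra stackings may be absorbed into either summand, since stacking preserves both $g_2$ and polytopality, still yielding option~(1); if $\Delta_1$ lies in option~(2), then $\Delta$ is a further stacking of the same core complex, still in option~(2).

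The main obstacle is the $(1,1)$ step, where Nevo-Novinsky's Theorem~\ref{thm: NevoNovinsky} is stated for homology \emph{spheres} while the summands $\Delta_i$ are a priori only homology \emph{manifolds}. I would bridge this by running the same missing-facet decomposition as an auxiliary induction for the $g_2=1$ classification: every prime homology $(d-1)$-manifold with $g_2=1$ lies in $\mathcal{G}_d$ by Theorem~\ref{thm: g_2=1 normal pseudomanifold}, every member of $\mathcal{G}_d$ is a polytopal sphere, and iterating the decomposition shows that \emph{every} homology $(d-1)$-manifold with $g_2=1$ is a polytopal sphere. With this strengthening the $(1,1)$ case closes and the induction goes through.
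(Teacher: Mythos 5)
Your proposal follows what I take to be the paper's intended (unwritten) argument: reduce the non-prime case to the prime case by repeatedly splitting along missing facets, and account for the $g_2$ distribution $(2,0)$ vs.\ $(1,1)$ among the pieces. The link-level check that the summands $\Delta_i$ are again homology manifolds is a sensible detail, and the observation that stacking can be absorbed into a $g_2=1$ summand correctly recovers the ``not necessarily prime'' phrasing of option~(1). Two points, however, are genuinely glossed over and should be made explicit.

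First, the very first move — ``$\Delta$ has a missing facet $\tau$, and I write $\Delta = \Delta_1 \#_\tau \Delta_2$'' — is not automatic for a homology \emph{manifold}. The $(d-2)$-sphere $\partial\tau$ need not separate $|\Delta|$; the alternative is that cutting $\Delta$ along $\partial\tau$ and capping with two copies of $\tau$ produces a \emph{connected} homology manifold $\tilde\Delta$, so that $\Delta$ is a handle addition over $\tilde\Delta$. Your link computation shows only that each $\lk_\Delta v$ for $v\in\tau$ splits locally; it does not decide the global separation question. The handle case must be ruled out quantitatively: a handle addition increases $g_2$ by $\binom{d+1}{2}$ (identify $d$ vertices and $\binom{d}{2}$ edges in the $g_2 = f_1 - d f_0 + \binom{d+1}{2}$ formula), and $g_2(\tilde\Delta)\geq 0$ by Theorem~\ref{thm: Kalai}, so a handle would force $g_2(\Delta)\geq\binom{d+1}{2}\geq 10 > 2$. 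Only with this observation is the connected-sum decomposition valid, and the same observation is needed again in your auxiliary $g_2=1$ induction.

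Second, your bridge for the $(1,1)$ case invokes Theorem~\ref{thm: g_2=1 normal pseudomanifold}, which is stated only for $d\geq 5$. Since the corollary includes $d=4$, you also need to handle homology $3$-manifolds with $g_2=1$: a homology $3$-manifold is an honest $3$-manifold (homology $2$-spheres are $2$-spheres), and Walkup's theorem \cite{W} that triangulated $3$-manifolds with $g_2 < 10$ are spheres reduces this to the $d=4$ case of Theorem~\ref{thm: NevoNovinsky}. With these two additions, the induction closes cleanly.
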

    \begin{remark}
      Note that except for the octahedral 3-sphere (which by definition is polytopal), all prime homology $(d-1)$-spheres with $g_2=2$ are obtained by centrally retriangulating a polytopal sphere with $g_2=0$ or 1 along the star of a face or the union of three adjacent facets. Therefore by Lemma \ref{lm: other prop of central retriangulation}, all such homology spheres are polytopal. Since the connected sum of polytopes is a polytope, it follows that all homology $(d-1)$-spheres with $g_2\leq 2$ are polytopal. 
    \end{remark}
    \begin{remark}
    	In \cite[Example 6.2]{NN}, it was shown that there exist non-polytopal spheres of any dimension $\geq 5$ with $g_3=0$. The Barnette sphere $S$ is an example of a non-polytopal 3-sphere with 8 vertices and 19 facets, so $g_2(S)=f_1(S)-4f_0(S)+10=19+8-4\cdot 8+10=5$. (The construction can be found in \cite{E}.) Also in \cite{ABS}, all non-polytopal 3-spheres with nine vertices are classified and it turns out that $g_2\geq 5$ in this case as well. The minimum value of $g_2$ for non-polytopal $(d-1)$-spheres appears to be unknown at present. On the other hand, in dimension three, $g_2<10$ implies the manifold is a sphere, as was originally proved by Walkup \cite{W}. It was shown in \cite{BD1} that for all $d\geq 3$ there are triangulations of $\mathbb{RP}^2 * \mathbb{S}^{d-4}$ ($\mathbb{S}^{-1}$ is the complex consisting of the empty set) that have $g_2=3$. This raises the question of whether $\mathbb{RP}^2 * \mathbb{S}^{d-4}$ is the only non-sphere pseudomanifold with $g_2=3$ triangulations.
    \end{remark}
    \begin{remark}
    	For a simplicial ball $\Delta$, one can compute, in addition to $g_2(\Delta)$, the relative $g_2$-number $g_2(\Delta,\partial\Delta)$. It is known that $g_2(\Delta,\partial\Delta)\geq 0$. The case of equality (for $d\geq 3$) was characterized in \cite{MN2}. It would be interesting to characterize simplicial balls with $g_2(\Delta,\partial\Delta)=1$. 
    \end{remark}
    \section*{Acknowledgements}
    The author was partially supported by a graduate fellowship from NSF grant DMS-1361423. I thank Ed Swartz for pointing out that Theorem \ref{thm:g_2=2 and d>4} holds for the class of normal pseudomanifolds and showing me the results of Bagchi and Datta \cite{BD1}. I also thank Steven Klee, Eran Nevo, Isabella Novik, and the referees for many helpful comments and discussions.
     \bibliographystyle{amsplain}

\begin{thebibliography}{10}
    	\bibitem{ABS} A. Altshuler, J. Bokowski and L. Steinberg, The classification of simplicial 3-spheres with nine vertices into polytopes and nonpolytopes, Discrete Mathematics, \textbf{31}(1980), 115-124.
    	\bibitem{B} D. Barnette, A proof of the lower bound conjecture for convex polytopes, Pacific J. Math., \textbf{46}(1973), 349-354.
    	\bibitem{BD1} B. Bagchi and B. Datta, A structure theorem for pseudomanifolds, Discrete Mathematics, \textbf{188}(1998), 41-60.
    	\bibitem{BD} B. Bagchi and B. Datta, Lower bound theorem for normal pseudomanifolds, Expo. Math., \textbf{26}(2008), 327-351.
    	\bibitem{BL} L. Billera and C. Lee, A proof of the sufficiency of McMullen's conditions for $f$-vectors of simplicial convex polytopes, J. Combin. Theory Ser. A, \textbf{31}(1981), 237-255.
    	\bibitem{BL2} L. Billera and C. Lee, Sufficiency of McMullen's conditions for $f$-vectors of simplicial polytopes, Bull. Amer. Math. Soc., \textbf{2}(1980), 181-185.
    	\bibitem{E} G. Ewald, Combinatorial Convexity and Algebraic Geometry, vol. 168 of Graduate Texts in Mathematics, Springer, 1996.
    	\bibitem{F} A. Fogelsanger, The generic rigidity of minimal cycles, ProQuest LLC, Ann Arbor, MI, 1988, Thesis (Ph.D.)-Cornell University.
    	\bibitem{G} B. Gr\"unbaum. Convex polytopes. Interscience Publishers John Wiley \& Sons, Inc., New York, 1967.
    	\bibitem{K} G. Kalai, Rigidity and the lower bound theorem I, Invent. Math., \textbf{88}(1987), 125-151.
    	\bibitem{L} C. Lee, P. L.-spheres, convex polytopes, and stress, Discrete Comput. Geom., \textbf{15}(1996), 389-421.
    	\bibitem{Ma} P. Mani, Spheres with few vertices, J. Combin. Theory Ser. A, \textbf{13}(1972) 346-352.
    	\bibitem{M} Peter McMullen, The maximum number of faces of a convex polytope, Mathematika, \textbf{17}(1970), 179-184.
        \bibitem{MN} Satoshi Murai and Eran Nevo, On the generalized lower bound conjecture for polytopes and spheres, Acta Math., \textbf{210}(2013), 185-202.
        \bibitem{MN2} Satoshi Murai and Isabella Novik, Face numbers of manifolds with boundary, Int. Math. Res. Not., to appear.
    	\bibitem{NN} Eran Nevo and Eyal Novinsky, A characterization of simplicial polytopes with $g_2 = 1$, J. Combin. Theory Ser. A, \textbf{118}(2011), 387-395.
    	\bibitem{St2} Richard P. Stanley, Combinatorics and Commutative Algebra, Boston Basel Berlin: Birkh\"auser, 1996.
    	\bibitem{St} Richard P. Stanley, The number of faces of a simplicial convex polytope, Adv. in Math., \textbf{35}(1980), 236-238.
        \bibitem{S2} Ed Swartz, Face enumeration: from spheres to manifolds. J. Europ. Math. Soc., \textbf{11}(2009), 449-485. 
    	\bibitem{S4} Ed Swartz, Lower bounds for $h$-vectors of $k$-CM, independence and broken circuit complexes, SIAM J. Disc. Math., \textbf{18}(2004), 647-661.
    	\bibitem{S3} Ed Swartz, Thirty-five years and counting, arXiv: 1411.0987.
    	\bibitem{S} Ed Swartz, Topological finiteness for edge vertex-enumeration, Adv. Math. \textbf{219}(2008), 1722-1728.
    	\bibitem{T} T. S. Tay. Lower-bound theorems for pseudomanifolds. Discrete Comput. Geom., \textbf{13}(1995), 203-216.
    	\bibitem{W} D. W. Walkup. The lower bound conjecture for 3- and 4-manifolds, Acta Math., \textbf{125}(1970), 75-107.
    	\bibitem{Whiteley} W. Whiteley, Cones, infinity, and 1-story buildings, Structural Topology, \textbf{8}(1983), 53-70. 
    \end{thebibliography}
    
\end{document}